\newcommand{\Q}{\ensuremath{\mathbb{Q}}}
\newcommand{\Z}{\ensuremath{\mathbb{Z}}}
\newcommand{\N}{\ensuremath{\mathbb{N}}}
\newcommand{\C}{\ensuremath{\mathbb{C}}}
\newcommand{\Zpl}{\ensuremath{\Z_{(p)}}}
\newtheorem{thm}{Theorem}
\newtheorem{prop}[thm]{Proposition}
\newtheorem{lem}[thm]{Lemma}
\newtheorem{defn}[thm]{Definition}
\newtheorem{exs}[thm]{Examples}
\begin{document}

\title{Infinite Sums of Additive Unstable Adams Operations and Cobordism}

\author{M-J Strong}
\email{pmp04mas@sheffield.ac.uk}

\author{Sarah Whitehouse}
\email{s.whitehouse@sheffield.ac.uk}

\address{Department of Pure Mathematics, University of Sheffield, Sheffield S3 7RH, UK.}

\begin{abstract}
The elements of the ring of bidegree $(0,0)$ additive unstable operations
in complex $K$-theory can be described explicitly as certain
infinite sums of Adams operations.
Here we show how to make sense of the same expressions for complex cobordism $MU$,
thus identifying the ``Adams subring'' of the corresponding ring of
cobordism operations. We prove that the Adams subring is the centre of
the ring of bidegree $(0,0)$ additive unstable cobordism operations.

For an odd prime $p$, the analogous result in the $p$-local split setting
is also proved.
\end{abstract}

\keywords
{$K$-theory, unstable operations, cobordism}
\subjclass[2000]{Primary: 55S25 
Secondary: 55N22, 
           19L41. 
           }
\date{$8^{\text{th}}$ December 2008}
\maketitle

\section{Introduction}
\label{intro}

In~\cite{gw} an injective ring map was defined from the ring of stable degree
zero operations in $p$-local $K$-theory to the corresponding ring of cobordism
operations. The main theorem identified the image of this map as the centre of
the target ring.

Here we show that that same thing happens in the additive unstable setting.
For a cohomology theory $E$, we work with additive unstable bidegree $(0,0)$
operations, that is natural transformations $E^0(-) \to E^0(-)$, where the functor
$E^0(-)$ is viewed as taking values in abelian groups.

For $E=KU$, the complex $K$-theory spectrum, all such operations can be
described in terms of Adams
operations, where certain specified infinite sums of Adams operations
are allowed. This goes back to work of Adams~\cite{adams-sln99}.

Since unstable Adams operations also exist for cobordism~\cite{wilson82, Kashiwabara94}, we can
consider the corresponding expressions for $MU$. The same infinite sums
converge and this allows us to define a ring map from the additive unstable
bidegree $(0,0)$ $K$-theory operations to the corresponding $MU$ operations.

The main work of this paper is devoted to showing that the image of this map
is precisely the centre of the target. The methods are close to those used
in the stable case, but suitably adapted to incorporate the Hopf ring
techniques necessary in the unstable case.
They exploit duality between
operations and cooperations for $K$-theory and for cobordism and they rely on the
fact that the operations under consideration are determined by their actions
on homotopy groups. In one respect the additive unstable case is simpler than
the stable situation: we are able to produce integral results directly
rather than by piecing together $p$-local results for each prime.

In the final section we prove the analogous result in the $p$-local split
setting.
\medskip

This paper is based on work in the Ph.D. thesis of the first author~\cite{strong},
produced under the supervision of the second author.

\section{Action on homotopy groups}
\label{sec2}

In this section we note the important fact that the operations we will be considering
act faithfully on homotopy groups.

First we introduce some notation. Our main reference for background on cohomology
operations is~\cite{bjw} and we adopt their notation and grading conventions. The
cohomology theories we will be concerned with
are complex $K$-theory $KU$ and complex cobordism $MU$. For an odd prime $p$, we
will also consider the Adams summand of $p$-local complex $K$-theory, which
we denote by $G$,  and the Brown-Peterson theory $BP$.

For a cohomology theory $E$, we denote by $\underline{E}_k$ the infinite loop spaces
in an $\Omega$-spectrum representing $E$. The unstable bidegree $(0,0)$ operations
of $E$-theory are given by $E^0(\underline{E}_0)\cong [\underline{E}_0, \underline{E}_0]$.
This is given the profinite topology and, as noted in~\cite{bjw}, it is complete with
respect to this topology for all of our examples.
Inside here are the \emph{additive} unstable bidegree $(0,0)$ operations $PE^0(\underline{E}_0)$,
which we will denote simply by $\mathcal{A}(E)$. Again, in all the theories we consider,
$\mathcal{A}(E)$ is complete with respect to the profinite filtration.

All our theories have good duality properties (see~\cite{bjw}). In particular,
operations are dual to cooperations: we have an isomorphism of $E^*$-modules
    $$
    E^*(\underline{E}_0)\cong \hom_{E^*}(E_*(\underline{E}_0), E^*).
    $$
The right-hand side is given the dual-finite topology: we filter by
    $$
    \ker\left(\hom_{E^*}(E_*(\underline{E}_0), E^*) \to \hom_{E^*}(L, E^*)\right),
    $$
where $L$ runs through finitely generated $E^*$-submodules of $E_*(\underline{E}_0)$.
Then the above isomorphism is a homeomorphism with respect to the profinite topology on the left-hand side
and the dual-finite topology on the right-hand side.

The additive operations $\mathcal{A}(E)$ are dual to $QE_*(\underline{E}_0)$,
 the indecomposable quotient of the cooperations for the $\star$-product:
    $$
    \mathcal{A}(E)=PE^*(\underline{E}_0)\cong \hom_{E^*}(QE_*(\underline{E}_0), E^*).
    $$
\smallskip

Let $\rm{Ab}_*$ denote the category of $\N$-graded abelian groups and
degree zero morphisms of abelian groups.
So
${\rm{Ab}}_*(M,N)$ denotes the degree zero homomorphisms  between two graded
abelian groups $M$ and $N$.
\smallskip

Given an unstable operation $\theta\in  E^0(\underline{E}_0)\cong [\underline{E}_0, \underline{E}_0]$, we may
consider the induced homomorphism of graded abelian groups
$\theta_*: \pi_*( \underline{E}_0)\to \pi_*( \underline{E}_0)$ given by the
action of $\theta$ on homotopy groups. Sending an operation to its action
on homotopy groups in this way gives a homomorphism of rings
    \begin{align*}
    E^0(\underline{E}_0) &\to \rm{Ab}_*\left(\pi_*( \underline{E}_0), \pi_*( \underline{E}_0)\right)\\
    \theta &\mapsto \theta_*.
    \end{align*}
We will consider the restriction of this map to the additive $E$-operations $\mathcal{A}(E)$ and denote this
by $\beta_E$:
    \begin{align*}
    \beta_E: \mathcal{A}(E) &\to {\rm{Ab}}_*\left(\pi_*( \underline{E}_0), \pi_*( \underline{E}_0)\right)\\
    \theta &\mapsto \theta_*.
    \end{align*}

\begin{prop}\label{faithfulaction}
For $E=MU$, $BP$, $KU$ or $G$,
the map 
    $$
    \beta_{E}:\mathcal{A}(E) \to 
    {\rm{Ab}}_*\left(\pi_*( \underline{E}_0), \pi_*( \underline{E}_0)\right)
    $$ 
is injective.
\end{prop}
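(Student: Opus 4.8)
The plan is to turn the statement into the assertion that the $E$-homology Hurewicz image generates the indecomposable cooperations, and then to verify this from the known structure of $E_*(\underline{E}_0)$. First I would make the action on homotopy explicit via the evaluation pairing. Writing $\iota \in E^0(\underline{E}_0)$ for the fundamental class (the identity operation), the standard identification $\pi_n(\underline{E}_0) \cong \widetilde{E}^0(S^n) \cong E_n$ sends $\alpha \colon S^n \to \underline{E}_0$ to $\alpha^*\iota$, and under it $\theta_*(\alpha) = \alpha^*\theta$ for any operation $\theta$. Letting $h \colon \pi_*(\underline{E}_0) \to E_*(\underline{E}_0)$ be the $E$-theory Hurewicz map and $\langle -,-\rangle$ the Kronecker pairing between $E^*(\underline{E}_0)$ and $E_*(\underline{E}_0)$, a diagram chase with the fundamental classes gives $\theta_*(\alpha) = \langle \theta, h(\alpha)\rangle$. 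Hence $\theta \in \ker\beta_E$ precisely when $\theta$ pairs trivially with every element of the Hurewicz image $h\bigl(\pi_*(\underline{E}_0)\bigr) \subseteq E_*(\underline{E}_0)$.

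Next I would feed this into the duality recorded in Section~\ref{sec2}. Because $\theta$ is additive, the isomorphism $\mathcal{A}(E) \cong \hom_{E^*}(QE_*(\underline{E}_0), E^*)$ shows that its pairing with $E_*(\underline{E}_0)$ factors through the indecomposable quotient $q \colon E_*(\underline{E}_0) \to QE_*(\underline{E}_0)$ and is continuous for the dual-finite topology. Therefore a $\theta \in \ker\beta_E$ vanishes not only on $q\bigl(h(\pi_*(\underline{E}_0))\bigr)$ but on the closed $E^*$-submodule it generates. Since $\mathcal{A}(E)$ is complete with respect to the profinite filtration, injectivity of $\beta_E$ is thus equivalent to the single statement that $q \circ h$ has image generating a dense $E^*$-submodule of $QE_*(\underline{E}_0)$.

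Establishing this density is the heart of the matter and the step I expect to be hardest, since everything up to here is formal. All four theories $MU$, $BP$, $KU$ and $G$ are complex orientable with even, torsion-free coefficient rings, so I would handle them uniformly using the explicit Hopf ring description of $E_*(\underline{E}_0)$: one reads off a set of $E^*$-module generators of the $\star$-indecomposables $QE_*(\underline{E}_0)$ and checks that each is, modulo $\star$-decomposables, the Hurewicz image of a homotopy class. For $KU$ this is transparent, as $QKU_*(\underline{KU}_0)$ is free over $KU_*$ on one generator in each even degree, realised by $h$ of the appropriate power of the Bott class; the remaining theories follow the same pattern via their Ravenel--Wilson-type homology. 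The main obstacle is precisely this Hopf-ring bookkeeping, identifying the indecomposable cooperations and confirming that the homotopy classes span them densely, which is where the unstable techniques genuinely enter.
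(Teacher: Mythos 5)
Your first paragraph and the reduction through the indecomposable quotient are sound, and they match the skeleton of the paper's argument: by duality $\theta$ is determined by the functional $\bar{\theta}\colon QE_*(\underline{E}_0)\to E^*$, and the action on homotopy is exactly the pairing of $\bar{\theta}$ with the classes $q(h(t))=e^{2h}\eta_R(t)$ for $t\in E^{-2h}$ (this is~\cite[12.4]{bjw}). However, your reformulation of what remains to be proved is already shaky: you never specify a topology on $QE_*(\underline{E}_0)$ itself --- the dual-finite topology is a filtration of the space of \emph{functionals}, not of the cooperations --- so ``closed submodule'' and ``dense submodule'' have no meaning here, and completeness of $\mathcal{A}(E)$ plays no role in this implication. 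The statement you actually need is algebraic: the $E^*$-submodule $D$ generated by the Hurewicz images has \emph{torsion quotient} $QE_*(\underline{E}_0)/D$, equivalently $D$ spans $QE_*(\underline{E}_0)_{\Q}$ over $E^*_{\Q}$; injectivity then follows because $QE_*(\underline{E}_0)$ and $E^*$ are torsion-free, so a functional vanishing on $D$ vanishes. That rational spanning statement, proved inductively from the Hopf ring relations, is precisely the paper's proof.

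The genuine gap is in your final step, and it is not deferred bookkeeping but a false claim: for all four theories it is \emph{not} true that the $E^*$-module generators of $QE_*(\underline{E}_0)$ are, modulo $\star$-decomposables, Hurewicz images of homotopy classes. For $KU$, $QKU_0(\underline{KU}_0)$ is the ring of integer-valued polynomials, with $KU^*$-module generators the binomial polynomials $\binom{w}{n}$ (the elements $b_n^{KU}v$), whereas the Hurewicz images of the powers of the Bott class are the monomials $w^n$; these generate only the proper submodule $\Z[w]$, whose quotient is torsion but nonzero. For $MU$, the generator $b_2^{MU}\eta_R(x_1)$ satisfies, rationally, $b_2^{MU}=\tfrac{1}{2}\bigl(e^4\eta_R(x_1)-x_1e^2\bigr)$, so it lies in the rational span of the Hurewicz images but not in their $MU^*$-span: the denominator $2$ is unavoidable. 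Indeed, were your claim true, every sequence $(\lambda_n)$ would be realized by a diagonal operation, so the congruences of Section~\ref{sec4} such as $\frac{\lambda_2-\lambda_1}{2}\in\Z$ would be vacuous and Theorem~\ref{mainthm} would collapse; the failure of the Hurewicz images to generate integrally is the central phenomenon of the paper. So your plan must be weakened to the rational statement plus torsion-freeness, which is exactly what the paper does.
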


\begin{proof}
As noted above, each of these theories has good duality, so
any $\theta\in \mathcal{A}(E)$ is uniquely determined by the corresponding
$E^*$-linear functional
    $$
    \bar{\theta}:QE_{\ast}(\underline{E}_{0}) \rightarrow E^{\ast}.
    $$

As $QE_{\ast}(\underline{E}_{0})$ and $E^{\ast}$ have no torsion, it is enough to
show that $\theta_{\ast}$ determines
    $$
    \bar{\theta}\otimes 1_{\Q}:QE_{\ast}(\underline{E}_{0})_\Q\rightarrow
    E^{\ast}_\Q,
    $$
where we are writing $M_\Q$ for $M\otimes \Q$.

By~\cite[12.4]{bjw}, the action of an operation on homotopy is given in terms
of the corresponding functional by
    $$
    \theta_{\ast}(t)=\overline{\theta}(e^{2h}\eta_{R}(t))
    $$
for $t\in E^{-2h}$. (Note that each of our theories $E$ has coefficients $E^*$ concentrated
in even degrees.)
Here $\eta_R: E^*\to QE_{\ast}(\underline{E}_{0})$ is the right
unit map and $e\in QE_{1}(\underline{E}_{1})$ is the suspension element.

Now for each of our theories $E$, every element of $QE_{\ast}(\underline{E}_{0})_\Q$ is an
$E^*_\Q$-linear combination of elements of the form $e^{2h}\eta_R(t)$,
where $t\in E^{-2h}$.
(See~\cite{bjw}; this may be proved by an inductive argument using the
relations in $QE_*(\underline{E}_*)$.)
It follows that $\overline{\theta}\otimes 1_{\Q}$ is
completely determined by $\theta_*$ as required.
\end{proof}

\section{Adams Operations}
\label{sec3}

We begin by discussing the definition and properties of unstable
Adams operations in $K$-theory and cobordism. 
We will denote our ground ring by $R$. Thus $R=\Z$ for $E=MU$ or $E=KU$ and 
$R=\Zpl$ for $E=BP$ or $E=G$.

The unstable Adams operations in $K$-theory $\Psi_{KU}^k$, for $k\in\Z$, were constructed
in~\cite{adams-vfs}. Unstable Adams operations for complex cobordism $MU$ and for the Brown-Peterson
theory $BP$ were defined by Wilson~\cite{wilson82} and also discussed by
Kashiwabara~\cite{Kashiwabara94}. These sources give us the following proposition.

\begin{prop}\label{adamsops}
Let $E=MU$, $BP$, $KU$ or $G$. For $k\in R$, there is an unstable Adams operation
$\Psi_E^{k}\in \mathcal{A}(E)$ such that
$(\Psi_E^{k})_*:\pi_{2n}(\underline{E}_0)\to \pi_{2n}(\underline{E}_0)$
is multiplication by $k^n$. 
These operations satisfy $\Psi_E^k\Psi_E^l=\Psi_E^{kl}$.
Furthermore $\Psi_E^k$ is
multiplicative. \qed
\end{prop}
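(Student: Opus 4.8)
The plan is to take the existence of the operations from the cited constructions and then to pin down the stated properties by reducing everything to the action on homotopy, using the faithfulness of Proposition~\ref{faithfulaction}. For $E=KU$ the operations $\Psi_{KU}^k$ are Adams' classical operations~\cite{adams-vfs}; for $E=MU$ and $E=BP$ they are the unstable operations constructed by Wilson~\cite{wilson82} and Kashiwabara~\cite{Kashiwabara94} via infinite loop space and Hopf ring methods, and $\Psi_G^k$ is inherited from the $BP$ (or $KU$) case. What remains is to confirm that each $\Psi_E^k$ lies in $\mathcal{A}(E)$, to record its action on homotopy, and to derive the composition law and multiplicativity.

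First I would verify the homotopy action. For $K$-theory this is immediate from Bott periodicity: $\pi_{2n}(\underline{KU}_0)\cong\mathbb{Z}$ is generated by the $n$-th power of the Bott class, $\Psi_{KU}^k$ scales the Bott class by $k$, and hence acts by $k^n$ on $\pi_{2n}(\underline{KU}_0)$. For $MU$ and $BP$ the operation is built precisely so that $(\Psi_E^k)_*$ is multiplication by $k^n$ on $\pi_{2n}(\underline{E}_0)$, and I would read this off from the defining data in the cited sources. Additivity, and hence membership in $\mathcal{A}(E)$, holds because each $\Psi_E^k$ is a ring homomorphism on $E^0(-)$ and so in particular primitive.

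With the homotopy action in hand, the composition law is the cleanest step. Since $\beta_E$ is a ring homomorphism, $\beta_E(\Psi_E^k\Psi_E^l)=(\Psi_E^k)_*(\Psi_E^l)_*$ acts on $\pi_{2n}(\underline{E}_0)$ by multiplication by $k^n l^n=(kl)^n$, which is exactly $\beta_E(\Psi_E^{kl})$. The injectivity of $\beta_E$ from Proposition~\ref{faithfulaction} then forces $\Psi_E^k\Psi_E^l=\Psi_E^{kl}$, with no direct manipulation of the operations themselves.

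Multiplicativity is the one property invisible to $\beta_E$, since that map records only the additive action on homotopy groups and forgets the ring structure of $E^0(X)$. I would therefore take it directly from the construction, where the Adams operations are produced as multiplicative operations on $E^0(-)$. I expect this to be the main obstacle to a self-contained proof, and the reason the statement is cited rather than reproved: constructing a genuine integral multiplicative unstable operation in the cobordism setting requires the Hopf ring machinery of Wilson and Kashiwabara and is subtler than the $K$-theory case, since the corresponding stable Adams operations on $MU$ exist only after inverting $k$.
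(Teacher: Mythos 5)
Your proposal is correct. The paper offers no argument at all for this proposition: the surrounding text simply cites Adams~\cite{adams-vfs} for $KU$ and Wilson~\cite{wilson82} and Kashiwabara~\cite{Kashiwabara94} for $MU$ and $BP$, and the statement is given with an immediate end-of-proof mark. You rely on the same sources for the genuinely hard content --- existence, the action on homotopy, and multiplicativity --- and you are right that multiplicativity cannot be recovered from $\beta_E$, which only records the additive action on homotopy groups. Where you differ is in deriving the composition law $\Psi_E^k\Psi_E^l=\Psi_E^{kl}$ internally: since $\beta_E$ is a ring homomorphism, both sides act on $\pi_{2n}(\underline{E}_0)$ as multiplication by $(kl)^n$, and injectivity (Proposition~\ref{faithfulaction}, which precedes this statement and is proved without reference to Adams operations, so there is no circularity) forces equality. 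This buys a small but real reduction in what must be extracted from the literature, and it is exactly the technique the paper itself uses later: Lemma~\ref{imincentre} establishes centrality of the $\sigma_n^{MU}$ by the same device of checking an identity on homotopy and invoking faithfulness. The paper's pure citation is shorter; your hybrid is more self-contained and makes visible which parts of the proposition are formal consequences of the stated homotopy action and which genuinely require the unstable constructions.
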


For $KU$, all additive unstable bidegree $(0,0)$ operations can be described in 
terms of Adams operations.

\begin{thm}\cite{adams-sln99}
\label{Kbasis}
The topological ring $\mathcal{A}(KU)$ may be identified with the collection
of infinite sums $\{\sum_{n=0}^\infty a_n \sigma_n^{KU}\, |\, a_n\in \Z \}$,
where
    $$
    \sigma_n^{KU}=\sum_{k=0}^{n}(-1)^{n+k}\binom{n}{k}\Psi_{KU}^{k}.
    $$
These expressions are added termwise and multiplied using 
$\Psi_{KU}^k\Psi_{KU}^l=\Psi_{KU}^{kl}$.
\qed
\end{thm}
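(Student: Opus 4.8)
The plan is to reduce everything to the faithful homotopy action of Proposition~\ref{faithfulaction} and then to identify the image of $\beta_{KU}$ explicitly. First I would record that for $E=KU$ the space $\underline{KU}_0\simeq\Z\times BU$ has $\pi_{2n}(\underline{KU}_0)\cong\Z$ for every $n\geq 0$ and vanishing odd homotopy, so that the target of $\beta_{KU}$ is canonically the product ring $\prod_{n\geq 0}\Z$: an operation $\theta$ corresponds to the sequence $(c_n)_{n\geq 0}$ where $\theta_*$ is multiplication by $c_n$ on $\pi_{2n}$. Since $(\Psi_{KU}^k)_*$ is multiplication by $k^n$ by Proposition~\ref{adamsops}, and composition of operations corresponds to componentwise multiplication of sequences (consistent with $\Psi_{KU}^k\Psi_{KU}^l=\Psi_{KU}^{kl}$), this realises $\beta_{KU}$ as an injective ring map into $\prod_{n\geq 0}\Z$, reducing the problem to computing its image.

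Next I would compute the image of each $\sigma_n^{KU}$. Evaluating on $\pi_{2m}$ gives the $n$-th finite difference $\sum_{k=0}^n(-1)^{n-k}\binom{n}{k}k^m=n!\,S(m,n)$, where $S(m,n)$ is a Stirling number of the second kind. The salient features are that $\beta_{KU}(\sigma_n^{KU})$ vanishes in degrees $m<n$ and equals $n!$ in degree $m=n$, so the transformation sending $(a_n)$ to the homotopy action of $\sum_n a_n\sigma_n^{KU}$ is lower triangular with diagonal entries $n!$. In particular, for any integer sequence $(a_n)$ the formal sum $\sum_n a_n\sigma_n^{KU}$ acts on each $\pi_{2m}$ through the finite sum $\sum_{n=0}^m a_n\,n!\,S(m,n)$, so it determines a well-defined graded endomorphism, and the triangular shape lets one recover the $a_n$ inductively. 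This already gives that distinct coefficient sequences yield distinct operations, i.e. the asserted uniqueness of the expansion, and it matches the profinite filtration of $\mathcal{A}(KU)$ against the filtration ``$a_n=0$ for $n<N$'', yielding the topological part of the identification.

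It remains to show that these formal sums are exactly the elements of $\mathcal{A}(KU)$, and here lies the main obstacle. Two things are needed: that each $\sum_n a_n\sigma_n^{KU}$ is a genuine operation, and that every operation arises this way. For the former I would argue that the partial sums are Cauchy in the profinite topology on $\mathcal{A}(KU)$ --- the triangular vanishing shows $\sigma_n^{KU}\to 0$ --- so completeness of $\mathcal{A}(KU)$ produces a limit with the computed homotopy action. For the latter, which is the crux, I would pass to the dual description $\mathcal{A}(KU)\cong\hom_{KU^*}(QKU_*(\underline{KU}_0),KU^*)$ and use that $QKU_*(\underline{KU}_0)$ is a free $KU^*$-module on a countable basis; under the duality pairing, computed via the formula $\theta_*(t)=\bar{\theta}(e^{2h}\eta_R(t))$ from the proof of Proposition~\ref{faithfulaction}, the $\sigma_n^{KU}$ are precisely the dual basis. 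Since the continuous dual of such a free module, topologised by the dual-finite filtration, consists of exactly the convergent integral combinations of the dual basis elements, every additive operation is uniquely of the form $\sum_n a_n\sigma_n^{KU}$ with $a_n\in\Z$. The hardest and most delicate point is the integrality implicit in this last step: over $\Q$ the triangular system is trivially solvable, so the real content is that an arbitrary operation has integral $\sigma_n^{KU}$-coordinates, which is exactly what the freeness of $QKU_*(\underline{KU}_0)$ and the identification of the dual basis encode.
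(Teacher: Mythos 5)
A preliminary remark: the paper does not actually prove Theorem~\ref{Kbasis} --- it is quoted from Adams~\cite{adams-sln99} (see also Clarke~\cite{ClarkeBU}) with no argument given --- so there is no internal proof to compare yours against. Judged on its own terms, your strategy is the classical one and most of its skeleton is sound: the identification of ${\rm{Ab}}_*\left(\pi_*(\underline{KU}_0),\pi_*(\underline{KU}_0)\right)$ with $\prod_{n\geq 0}\Z$, the injectivity of $\beta_{KU}$ (Proposition~\ref{faithfulaction}), the computation that $\sigma_n^{KU}$ acts on $\pi_{2m}$ as multiplication by $n!\genfrac{\{}{\}}{0pt}{}{m}{n}$ (Lemma~\ref{stirlingidentity}), the resulting triangularity and uniqueness of coefficients, and the reduction of existence, surjectivity and the topological statement to a claim about the continuous $KU^*$-dual of $QKU_*(\underline{KU}_0)$.

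The gap sits at the crux, where you assert that ``under the duality pairing, computed via the formula $\theta_*(t)=\bar{\theta}(e^{2h}\eta_R(t))$, the $\sigma_n^{KU}$ are precisely the dual basis.'' That formula determines the pairing of an operation only against the elements $e^{2h}\eta_R(t)$, and these generate $QKU_*(\underline{KU}_0)$ only \emph{rationally}; they are not an integral basis. The dual-basis claim concerns an honest $KU^*$-basis, namely $\{1\}\cup\{b_n^{KU}v : n\geq 1\}$ of~\cite[Theorem 16.15]{bjw}, and it cannot be extracted from the homotopy-action formula alone: one must know how $b_n^{KU}v$ expands rationally in the elements $e^{2k}v^k$. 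This is exactly the integrality content that you correctly flag as ``hardest and most delicate,'' but it is then encoded into an unproved assertion rather than established. To close it, one uses the identification of $QKU_0(\underline{KU}_0)$ with the ring of numerical polynomials (\cite{ClarkeBU,schwartz}; this is precisely the machinery the paper deploys in Section~\ref{sec4}), under which $u^n b_n^{KU}v$ corresponds to $\binom{w}{n}$ and $\Psi^k_{KU}$ to evaluation at $w=k$, so that $\sigma_n^{KU}$ corresponds to the $n$-th finite difference $f\mapsto (\Delta^n f)(0)$ and the dual-basis property becomes $(\Delta^n\binom{w}{m})(0)=\delta_{nm}$. Equivalently, in the paper's notation one needs $\bar{\theta}(b_n^{KU}v)=(C_n\cdot\lambda)\,u^n$ for an operation acting by $\lambda$; taking $\theta=\sigma_m^{KU}$, with $\lambda_k=m!\genfrac{\{}{\}}{0pt}{}{k}{m}$, the fact that the matrices of Stirling numbers of the first and second kinds are mutually inverse gives $C_n\cdot\lambda=\delta_{nm}$. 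Note that your earlier shortcut ``the triangular vanishing shows $\sigma_n^{KU}\to 0$'' has the same defect --- convergence in the dual-finite topology requires vanishing on finitely generated $KU^*$-submodules of the integral module, not merely vanishing of homotopy actions --- and is repaired by the same computation, exactly as in the proof of Proposition~\ref{infsumsMU} for $MU$.
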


Explicit multiplication and comultiplication formulas for this topological
basis, as well as further results, can be found in~\cite{strong}.
\medskip

We now show that the corresponding infinite sums of Adams operations
are also defined for $MU$.
First we note some information about the Adams operations viewed
as functionals on the cooperations.

\begin{lem} \label{adamsMUopfunctional}
The Adams operation $\Psi^{k}_{MU}$ considered as a functional
    $$
    \overline{\Psi^{k}_{MU}}:QMU_{\ast}(\underline{MU}_0)\rightarrow MU^{\ast}
    $$
is determined by
    $$
    e^{2h}\eta_{R}(x) \ \mapsto \ k^{h}x \quad \textrm{for} \ x\in MU^{-2h}.
    $$
\end{lem}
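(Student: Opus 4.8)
The plan is to read off the formula directly from the description of the action on homotopy groups established in the proof of Proposition~\ref{faithfulaction}, combined with the known homotopy action of the Adams operations recorded in Proposition~\ref{adamsops}. The lemma is essentially a specialization of the general machinery of Section~\ref{sec2} to $\theta = \Psi^k_{MU}$.

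First I would assemble the two ingredients. From \cite[12.4]{bjw}, as already used in the proof of Proposition~\ref{faithfulaction}, any additive operation $\theta\in\mathcal{A}(MU)$ acts on homotopy by
    $$
    \theta_*(t)=\overline{\theta}(e^{2h}\eta_R(t)),\qquad t\in MU^{-2h},
    $$
under the identification $\pi_{2h}(\underline{MU}_0)\cong MU^{-2h}$. From Proposition~\ref{adamsops}, $(\Psi^k_{MU})_*$ is multiplication by $k^h$ on $\pi_{2h}(\underline{MU}_0)$. Setting $\theta=\Psi^k_{MU}$ and substituting the latter into the former, the left-hand side equals $k^h t$, whence $\overline{\Psi^k_{MU}}(e^{2h}\eta_R(t))=k^h t$ for every $t\in MU^{-2h}$. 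Renaming $t$ as $x$ yields the asserted values on the generators.

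It then remains to justify the word ``determined'' in the statement, for which I would reuse exactly the two facts from the proof of Proposition~\ref{faithfulaction}: the elements $e^{2h}\eta_R(x)$, ranging over $x\in MU^{-2h}$ and $h\ge 0$, span $QMU_*(\underline{MU}_0)_\Q$ as an $MU^*_\Q$-module, and both $QMU_*(\underline{MU}_0)$ and $MU^*$ are torsion-free. Consequently any $MU^*$-linear functional on $QMU_*(\underline{MU}_0)$ is pinned down by its values on this spanning set, so the computation above specifies $\overline{\Psi^k_{MU}}$ uniquely.

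I expect the only delicate point to be bookkeeping rather than genuine mathematics: one must verify that the coefficient element $x\in MU^{-2h}$ appearing in the statement is matched, through the identification $\pi_{2h}(\underline{MU}_0)\cong MU^{-2h}$, with the homotopy class $t$ fed into the formula of \cite[12.4]{bjw}, so that the exponent in $k^h$ emerges correctly. Given the duality and spanning results of Section~\ref{sec2}, no further obstacle arises.
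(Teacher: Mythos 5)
Your proposal is correct and follows essentially the same argument as the paper: both obtain the values $\overline{\Psi^k_{MU}}(e^{2h}\eta_R(x))=k^h x$ by combining the relation $\theta_*(t)=\overline{\theta}(e^{2h}\eta_R(t))$ from \cite[12.4]{bjw} with the homotopy action of $\Psi^k_{MU}$ from Proposition~\ref{adamsops}, and both justify uniqueness via torsion-freeness and the fact that the elements $e^{2h}\eta_R(x)$ generate $QMU_*(\underline{MU}_0)_\Q$ over $MU^*_\Q$. The only difference is ordering (you compute the values first and then argue uniqueness, the paper does the reverse), which is immaterial.
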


\begin{proof}
As noted in the proof of Proposition~\ref{faithfulaction},
$\overline{\Psi^{k}_{MU}}\otimes 1_\Q$ determines $\overline{\Psi^{k}_{MU}}$  and it is enough
to specify this $MU^*$-linear map on elements of the form $ e^{2h}\eta_{R}(x)$ for
$ x\in MU^{-2h}$ since these generate $QMU_{\ast}(\underline{MU}_0)_\Q$
as a module over $MU^*_\Q$.

That these values are as claimed follows from the relation
    $$
    {\Psi^k_{MU}}_{\ast}(x)=\overline{\Psi^{k}_{MU}}(e^{2h}\eta_{R}(x))
    $$
for $x\in MU^{-2h}\cong \pi_{2n}(\underline{MU}_0)$ and the 
action of $\Psi^k_{MU}$ on homotopy groups.
\end{proof}

The following combinatorial lemma will be useful.

\begin{lem}\label{stirlingidentity}
For $m,n\geq 0$,
    $$
    \sum_{k=0}^{n}(-1)^{n+k}\binom{n}{k}k^{m}=
    n!\genfrac{\{}{\}}{0pt}{}{m}{n},
    $$
where $\genfrac{\{}{\}}{0pt}{}{m}{n}$ denotes a Stirling number of
the second kind. In particular, 
    \begin{align*}
    \sum_{k=0}^{m}(-1)^{m+k}\binom{m}{k}k^{m}&=m!,\\
    \sum_{k=0}^{n}(-1)^{n+k}\binom{n}{k}k^{m}&=0 \quad \text{if $n>m$}. 
    \end{align*}
\qed
\end{lem}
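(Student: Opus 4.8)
The plan is to prove the main identity by a double-counting argument, recognising the right-hand side $n!\genfrac{\{}{\}}{0pt}{}{m}{n}$ as the number of surjections between finite sets, and then to read off the two special cases from elementary properties of Stirling numbers. First I would recall the combinatorial meaning of the right-hand side: the Stirling number $\genfrac{\{}{\}}{0pt}{}{m}{n}$ counts the partitions of an $m$-element set into $n$ nonempty (unordered) blocks, and labelling those blocks by the $n$ elements of a target set in all $n!$ ways shows that $n!\genfrac{\{}{\}}{0pt}{}{m}{n}$ is exactly the number of surjective functions from an $m$-element set onto an $n$-element set.

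Next I would count the same surjections by inclusion--exclusion on the image. The total number of functions from an $m$-set to an $n$-set is $n^m$, and the number of functions whose image is contained in a fixed $k$-element subset of the target is $k^m$; since there are $\binom{n}{k}$ such subsets, the usual alternating correction yields $\sum_{k=0}^{n}(-1)^{n-k}\binom{n}{k}k^{m}$ for the number of surjections. Because $(-1)^{n-k}=(-1)^{n+k}$, this agrees term by term with the left-hand side of the lemma, which establishes the identity.

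The two displayed special cases are then immediate. For $n=m$ there is a unique partition of an $m$-set into $m$ singletons, so $\genfrac{\{}{\}}{0pt}{}{m}{m}=1$ and the sum equals $m!$; for $n>m$ an $m$-set admits no partition into $n$ nonempty blocks, so $\genfrac{\{}{\}}{0pt}{}{m}{n}=0$ and the sum vanishes. As an alternative I could bypass Stirling numbers altogether for these two cases by observing that the left-hand side is the $n$-th forward difference of $x^m$ evaluated at $x=0$; since each difference lowers the degree of a polynomial by one, the expression vanishes identically for $n>m$ and reduces to the leading term $m!$ when $n=m$.

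I do not expect any genuine obstacle here, since this is a standard combinatorial identity. The only point requiring care is the bookkeeping of signs (using $(-1)^{n-k}=(-1)^{n+k}$) together with the edge case $m=0$, where one must adopt the convention $0^0=1$ for the formula to hold; with that convention both sides equal $1$ when $n=0$ and $0$ when $n\geq 1$, matching the values $0!\,\genfrac{\{}{\}}{0pt}{}{0}{0}=1$ and $\genfrac{\{}{\}}{0pt}{}{0}{n}=0$.
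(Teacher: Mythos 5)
Your proof is correct. Note that the paper itself offers no proof of this lemma at all: it is stated with a terminal \qed as a standard combinatorial fact (the well-known inclusion--exclusion formula for Stirling numbers of the second kind), so there is no argument in the paper to compare yours against. Your surjection-counting argument is exactly the classical proof of this identity: $n!\genfrac{\{}{\}}{0pt}{}{m}{n}$ counts surjections from an $m$-set onto an $n$-set, inclusion--exclusion over the image counts the same surjections as $\sum_{k=0}^{n}(-1)^{n-k}\binom{n}{k}k^{m}$, and the sign identification $(-1)^{n-k}=(-1)^{n+k}$ finishes the main identity, with the two special cases following from $\genfrac{\{}{\}}{0pt}{}{m}{m}=1$ and $\genfrac{\{}{\}}{0pt}{}{m}{n}=0$ for $n>m$. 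Your attention to the $m=0$ edge case and the convention $0^{0}=1$ is a point the paper's terse statement glosses over, and your alternative finite-difference argument (the left-hand side is $\Delta^{n}x^{m}$ evaluated at $0$, which vanishes for $n>m$ and equals $m!$ for $n=m$) is also valid and is arguably the formulation closest to how the lemma is actually used in Propositions \ref{infsumsMU} and \ref{kinject}.
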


\begin{defn}
For $n\in\N$, define $\sigma_n^{MU}\in \mathcal{A}(MU)$ by
    $$
    \sigma_n^{MU}=\sum_{k=0}^{n}(-1)^{n+k}\binom{n}{k}\Psi_{MU}^{k}.
    $$
\end{defn}

\begin{prop}\label{infsumsMU}
The infinite sums $\sum_{n=0}^\infty a_n\sigma^{MU}_n\!$, where
$a_n\in\Z$, are well-defined operations in $\mathcal{A}(MU)$.
\end{prop}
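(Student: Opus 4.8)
The plan is to exploit the fact, recorded in Section~\ref{sec2}, that $\mathcal{A}(MU)$ is complete with respect to the profinite filtration. It therefore suffices to show that the terms $a_n\sigma_n^{MU}$ tend to zero, for then the partial sums form a Cauchy sequence whose limit is a well-defined element of $\mathcal{A}(MU)$. Under the duality isomorphism $\mathcal{A}(MU)\cong \hom_{MU^*}(QMU_*(\underline{MU}_0), MU^*)$ this filtration is the dual-finite one, a basic neighbourhood of zero being the subgroup $F_L$ of functionals vanishing on a fixed finitely generated $MU^*$-submodule $L\subseteq QMU_*(\underline{MU}_0)$. Since each $F_L$ is an additive subgroup and the $a_n$ lie in $\Z$, it is enough to prove that for every such $L$ there is an $N$ with $\overline{\sigma_n^{MU}}|_L = 0$ for all $n>N$.

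First I would evaluate the functional $\overline{\sigma_n^{MU}}$ on the distinguished elements $e^{2h}\eta_R(x)$. By Lemma~\ref{adamsMUopfunctional} and the definition of $\sigma_n^{MU}$,
    $$
    \overline{\sigma_n^{MU}}(e^{2h}\eta_R(x)) = \Bigl(\sum_{k=0}^{n}(-1)^{n+k}\binom{n}{k}k^{h}\Bigr)x = n!\,\genfrac{\{}{\}}{0pt}{}{h}{n}\,x
    $$
for $x\in MU^{-2h}$, where the second equality is Lemma~\ref{stirlingidentity}. The crucial consequence is that this expression vanishes whenever $n>h$, since the Stirling number $\genfrac{\{}{\}}{0pt}{}{h}{n}$ is then zero.

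It remains to pass from these spanning elements to an arbitrary finitely generated submodule $L$. Here I would use that $QMU_*(\underline{MU}_0)$ is torsion free (as recorded in the proof of Proposition~\ref{faithfulaction}), so that $L$ embeds in $L_\Q$ and $\overline{\sigma_n^{MU}}$ vanishes on $L$ as soon as its rational extension vanishes on $L_\Q$. Now $L_\Q$ is a finitely generated $MU^*_\Q$-submodule of $QMU_*(\underline{MU}_0)_\Q$, and by the argument in Proposition~\ref{faithfulaction} the latter is spanned over $MU^*_\Q$ by the elements $e^{2h}\eta_R(x)$. Hence each of the finitely many generators of $L_\Q$ is an $MU^*_\Q$-combination of such elements with $h$ bounded by some $N$; as $\overline{\sigma_n^{MU}}\otimes 1_\Q$ is $MU^*_\Q$-linear and kills every $e^{2h}\eta_R(x)$ with $h<n$, it vanishes on $L_\Q$, and thus $\overline{\sigma_n^{MU}}|_L=0$, for all $n>N$. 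The main obstacle is precisely this last reduction: controlling an arbitrary finitely generated $L$ by a bounded range of the spanning elements $e^{2h}\eta_R(x)$, which hinges on combining the rational spanning statement with torsion-freeness. Once that is in place, the Stirling identity does the real work.
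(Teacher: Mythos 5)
Your proposal is correct and follows essentially the same route as the paper's proof: reduce via completeness to showing $\sigma_n^{MU}\to 0$ in the dual-finite topology, compute $\overline{\sigma_n^{MU}}(e^{2h}\eta_R(x)) = n!\genfrac{\{}{\}}{0pt}{}{h}{n}x$ via Lemmas~\ref{adamsMUopfunctional} and~\ref{stirlingidentity}, and use torsion-freeness together with the rational spanning of $QMU_*(\underline{MU}_0)_\Q$ by the elements $e^{2h}\eta_R(x)$. Your treatment of the final step is in fact slightly more explicit than the paper's (which packages it as the statement that $f\mapsto f\otimes 1_\Q$ is a homeomorphism onto its image), since you spell out how an arbitrary finitely generated submodule $L$ is controlled by a bounded range of the spanning elements.
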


\begin{proof}
By Proposition~\ref{adamsops}, we have the Adams operations $\Psi^k_{MU}\in \mathcal{A}(MU)$.
So clearly finite sums of the $\sigma^{MU}_n$ are well-defined operations
in $\mathcal{A}(MU)$. To see that the same is true for the infinite sums, it suffices
by completeness to show that $\sigma^{MU}_n\to 0$ as $n\to \infty$ in the profinite 
topology on $\mathcal{A}(MU)$.

Now $\mathcal{A}(MU)$ is homeomorphic to
$\hom_{MU^{\ast}}(QMU_{\ast}(\underline{MU}_0),MU^{\ast})$
with the dual-finite topology.
Since $QMU_{\ast}(\underline{MU}_0)$ and $MU^{\ast}$ are torsion-free, we have
an injective map
    $$
    \hom_{MU^{\ast}}(QMU_{\ast}(\underline{MU}_0),MU^{\ast}) \hookrightarrow
    \hom_{MU^{\ast}_\Q}(QMU_{\ast}(\underline{MU}_0)_\Q,
    MU^{\ast}_\Q),
    $$
given by
    $$
    f\mapsto f\otimes 1_{\Q}.
    $$
This is a homeomorphism to its image,
where the target is also endowed with the dual-finite
topology. Thus it is enough to show that $\overline{\sigma_n^{MU}}\otimes 1_\Q \to 0$
as $n\to\infty$.

Using Lemmas~\ref{adamsMUopfunctional} and~\ref{stirlingidentity},
for $x\in MU^{-2h}$, we have
    $$
    \overline{\sigma_{n}^{MU}}\left(e^{2h}\eta_{R}(x)\right)
    = \left(\sum_{k=0}^{n}(-1)^{n+k}\binom{n}{k}k^{h}\right)x
    = n!\genfrac{\{}{\}}{0pt}{}{h}{n} x.
    $$
Thus, $\overline{\sigma_{n}^{MU}}\left(e^{2h}\eta_{R}(x)\right)=0$ if $h<n$
and 
$\overline{\sigma_{n}^{MU}}\otimes 1_{\Q}$ is zero on the
$MU^{\ast}_\Q$-submodule of $QMU_{\ast}(\underline{MU}_0)_\Q$
generated by the finite collection of elements of the form $e^{2h}\eta_{R}(x)$
where $x$ runs through a $\Z$-basis of $MU^{-2h}$ and $h<n$.

Since $QMU_{\ast}(\underline{MU}_0)_\Q$ is generated as an
$MU^{\ast}_\Q$-module by $e^{2h}\eta_{R}(x)$ where $x$ runs 
through a $\Z$-basis of $MU^{-2h}$ and
$h\geq 0$, it
follows that $\overline{\sigma_{n}^{MU}}\otimes 1_{\Q}\rightarrow 0$ as
$n\rightarrow\infty$ in the dual-finite topology.
\end{proof}

\begin{prop}\label{kinject}
The map
    $$
    \iota: \mathcal{A}(KU)\to \mathcal{A}(MU)
    $$
given by
    $$
    \sum_{n=0}^\infty a_n\sigma^{KU}_n
    \mapsto\sum_{n=0}^\infty a_n\sigma^{MU}_n
    $$
is an injective ring homomorphism.
\end{prop}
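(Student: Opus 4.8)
The plan is to exploit the faithful action on homotopy groups established in Proposition~\ref{faithfulaction}, reducing every assertion about $\iota$ to an elementary computation of ``eigenvalues'' that is manifestly independent of the theory $E$. First I would settle well-definedness: by Theorem~\ref{Kbasis} the operations $\sigma_n^{KU}$ form a topological basis of $\mathcal{A}(KU)$, so each element of $\mathcal{A}(KU)$ has a \emph{unique} expansion $\sum_n a_n\sigma_n^{KU}$ with $a_n\in\Z$, while by Proposition~\ref{infsumsMU} the associated sum $\sum_n a_n\sigma_n^{MU}$ converges to a well-defined element of $\mathcal{A}(MU)$. Hence $\iota$ is a well-defined map, and it is additive since both families of sums are added termwise.

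The key computation is the following. Combining Proposition~\ref{adamsops}, which gives $(\Psi_E^k)_\ast=k^h$ on $\pi_{2h}(\underline{E}_0)$, with Lemma~\ref{stirlingidentity}, the operation $\theta=\sum_n a_n\sigma_n^E$ acts on $\pi_{2h}(\underline{E}_0)$ as multiplication by the integer
\[
\lambda_h(\theta)=\sum_{n=0}^{h} a_n\, n!\,\genfrac{\{}{\}}{0pt}{}{h}{n},
\]
the sum being finite because the Stirling number vanishes for $n>h$. The decisive point is that this scalar depends only on the coefficient sequence $(a_n)$ and on $h$, and not on whether $E=KU$ or $E=MU$. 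Since each of these theories has homotopy concentrated in even degrees and $\theta$ acts diagonally, by scalar multiplication, on each $\pi_{2h}(\underline{E}_0)$, the assignment $\epsilon_E\colon\theta\mapsto(\lambda_h(\theta))_{h\ge0}$ is a ring homomorphism into $\prod_{h\ge0}\Z$ (with componentwise operations): it factors the ring homomorphism $\beta_E$ of Section~\ref{sec2}, because $\beta_E$ sends composition of operations to composition, hence to the product of scalars, on homotopy. By Proposition~\ref{faithfulaction} the map $\beta_E$ is injective, and as the scalar sequence records $\beta_E(\theta)$ completely, $\epsilon_E$ is injective on operations of this form; in particular $\epsilon_{KU}$ is injective on all of $\mathcal{A}(KU)$. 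The eigenvalue computation says precisely that $\epsilon_{MU}\circ\iota=\epsilon_{KU}$.

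The conclusion then follows by a diagram chase. For injectivity, if $\iota(x)=\iota(x')$ then $\epsilon_{KU}(x)=\epsilon_{MU}(\iota(x))=\epsilon_{MU}(\iota(x'))=\epsilon_{KU}(x')$, so $x=x'$ by injectivity of $\epsilon_{KU}$. For multiplicativity, $\epsilon_{MU}(\iota(xy))=\epsilon_{KU}(xy)=\epsilon_{KU}(x)\epsilon_{KU}(y)=\epsilon_{MU}(\iota(x))\epsilon_{MU}(\iota(y))=\epsilon_{MU}(\iota(x)\iota(y))$, whence $\iota(xy)=\iota(x)\iota(y)$ by injectivity of $\epsilon_{MU}$; the same chase applied to the unit sequences gives $\iota(1)=1$.

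The step I expect to be the main obstacle is verifying that $\epsilon_{MU}$ is genuinely multiplicative on the image of $\iota$: this requires that the product $\iota(x)\iota(y)$, formed using the continuous multiplication of $\mathcal{A}(MU)$ applied to two \emph{infinite} sums, acts on each $\pi_{2h}(\underline{MU}_0)$ as the composite of the two scalar multiplications. I would handle this by appealing to the fact that $\beta_{MU}$ is a ring homomorphism together with the scalar, diagonal nature of the action, so that no direct comparison of the structure constants of the products $\sigma_m^{MU}\sigma_n^{MU}$ with those of $\sigma_m^{KU}\sigma_n^{KU}$ is ever needed.
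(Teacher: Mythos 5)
Your proof is correct, and its computational core --- that $\sum_n a_n\sigma_n^E$ acts on $\pi_{2h}(\underline{E}_0)$ as multiplication by $\sum_{n\le h}a_n\,n!\genfrac{\{}{\}}{0pt}{}{h}{n}$, a quantity independent of whether $E=KU$ or $E=MU$ --- is exactly the computation the paper uses, via Lemma~\ref{stirlingidentity}. The difference lies in how the two conclusions are extracted. For injectivity the paper argues more directly: if $a_m\neq 0$ with $m$ minimal, then $\iota\bigl(\sum a_n\sigma_n^{KU}\bigr)$ acts as multiplication by $a_m m!\neq 0$ on $\pi_{2m}(\underline{MU}_0)\neq 0$, hence is a nonzero operation; this uses only the trivial direction (nonzero action implies nonzero operation) and Theorem~\ref{Kbasis}, not Proposition~\ref{faithfulaction}, whereas you route through injectivity of $\beta_{KU}$. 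For multiplicativity the paper gives a one-line structure-constants argument --- the product of two infinite sums is determined in both source and target by $\Psi^k\Psi^l=\Psi^{kl}$ --- which implicitly relies on continuity of composition in $\mathcal{A}(MU)$ to expand products of infinite sums termwise. Your argument replaces this by faithfulness of $\beta_{MU}$ together with the observation that $\iota(xy)$ and $\iota(x)\iota(y)$ are both diagonal operations with the same eigenvalue sequence; this avoids any comparison of structure constants and any appeal to continuity of multiplication, at the cost of leaning harder on Proposition~\ref{faithfulaction} (it is, in fact, the same device the paper itself uses to prove Lemma~\ref{imincentre}). One point you should make explicit: the final cancellation needs $\epsilon_{MU}$ to be injective on a set containing $\iota(x)\iota(y)$, which a priori is not ``of the form $\sum a_n\sigma_n^{MU}$''; so $\epsilon_{MU}$ should be defined, and its injectivity asserted, on all operations acting by scalars on homotopy (what the paper later calls $\mathcal{D}(MU)$), where it again follows from Proposition~\ref{faithfulaction}. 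With that reading, your argument is complete and at the same level of rigour as the paper's (both take for granted that a convergent infinite sum acts on each $\pi_{2h}$ by the termwise-stabilized scalar, which is justified by the dual-finite topology).
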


\begin{proof}
Consider $\sum_{n=0}^{\infty}a_{n}\sigma_{n}^{MU}=
\iota\left(\sum_{n=0}^{\infty}a_{n}\sigma_{n}^{KU}\right)$ in
$\mathcal{A}(MU)$ and suppose $a_{m}\neq 0$ with $m$ minimal.
Since
    $$
    \sum_{n=0}^{\infty}a_{n}\sigma_{n}^{MU}=
    \sum_{n=0}^{\infty}a_{n}\sum_{k=0}^{n}(-1)^{n+k}\binom{n}{k}
    \Psi_{MU}^{k},
    $$
this operation acts on $\pi_{2m}(\underline{MU}_{0})\neq 0$ as
multiplication by
    $$
    \sum_{n=0}^{\infty}a_{n}\sum_{k=0}^{n}(-1)^{n+k} \binom{n}{k}k^{m}.
    $$

Since we have assumed that $a_{n}=0$ for $n<m$,
it follows from Lemma~\ref{stirlingidentity}
that $\sum_{n=0}^{\infty}a_{n}\sigma_{n}^{MU}$ acts on
$\pi_{2m}(\underline{MU}_{0})$ as multiplication by
    $a_{m}m!\neq 0$.
So $\sum a_{n}\sigma_{n}^{MU}$ is a
non-trivial operation in $\mathcal{A}(MU)$ and therefore $\iota$ is
injective.

It is easy to see that we have an algebra map:
the product of two
infinite sums is determined in both the source and the target by the products
of Adams operations.
\end{proof}

We note that the injective map $\iota$ above also respects the coalgebra
structure that we have on each side, since the comultiplication on
a general infinite sum is determined
by the fact that the Adams operations are
group-like.
\smallskip

We think of the image of $\iota$ as the ``Adams subring'' of $\mathcal{A}(MU)$. Our main
result (Theorem~\ref{mainthm}) is that this is the centre of $\mathcal{A}(MU)$.
We can prove one inclusion immediately.

\begin{lem}\label{imincentre}
The image $\textrm{Im}(\iota)$ is contained in the centre $Z(\mathcal{A}(MU))$.
\end{lem}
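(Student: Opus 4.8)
The plan is to reduce the assertion to a statement about actions on homotopy groups, exploiting the faithfulness established in Proposition~\ref{faithfulaction}. Fix an arbitrary element $\phi=\sum_{n=0}^{\infty}a_n\sigma_n^{MU}\in\textrm{Im}(\iota)$ and an arbitrary $\psi\in\mathcal{A}(MU)$; I must show $\phi\psi=\psi\phi$. Since $\beta_{MU}$ is an injective ring homomorphism, it is enough to verify the equality after applying $\beta_{MU}$, that is, to show $\phi_*\psi_*=\psi_*\phi_*$ as degree-zero graded endomorphisms of $\pi_*(\underline{MU}_0)$.

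The crux is the observation that every element of $\textrm{Im}(\iota)$ acts on each homotopy group as multiplication by a scalar. Indeed, by Proposition~\ref{adamsops} the operation $\Psi_{MU}^k$ acts on $\pi_{2n}(\underline{MU}_0)$ as multiplication by $k^n$, so $\sigma_m^{MU}$ acts there as multiplication by the integer $\sum_{k=0}^{m}(-1)^{m+k}\binom{m}{k}k^n$. By Lemma~\ref{stirlingidentity} this integer equals $m!\genfrac{\{}{\}}{0pt}{}{n}{m}$, which vanishes whenever $m>n$. Hence on a fixed group $\pi_{2n}(\underline{MU}_0)$ the infinite sum defining $\phi_*$ collapses to the finite sum $\sum_{m=0}^{n}a_m\,m!\genfrac{\{}{\}}{0pt}{}{n}{m}$, so $\phi_*$ acts on $\pi_{2n}(\underline{MU}_0)$ as multiplication by a single integer scalar $c_n$.

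Granting this, commutativity is formal. As $\psi_*$ is a degree-zero graded map, it restricts to a homomorphism of the abelian group $\pi_{2n}(\underline{MU}_0)$ for each $n$, and multiplication by the scalar $c_n$ commutes with any such homomorphism. Therefore $\phi_*\psi_*$ and $\psi_*\phi_*$ agree on each $\pi_{2n}(\underline{MU}_0)$, hence on all of $\pi_*(\underline{MU}_0)$. Faithfulness (Proposition~\ref{faithfulaction}) then yields $\phi\psi=\psi\phi$, and since $\psi$ was arbitrary we conclude $\phi\in Z(\mathcal{A}(MU))$.

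The only genuine content sits in the scalar claim, and there the decisive input is the vanishing $m!\genfrac{\{}{\}}{0pt}{}{n}{m}=0$ for $m>n$ from Lemma~\ref{stirlingidentity}: this guarantees that the a priori infinite sum truncates on each homotopy group, so that $\phi_*$ is an honest scalar rather than merely a limit of scalars. Once this is secured the commutation follows because scalars lie in the centre of the endomorphism ring of any abelian group. I expect this inclusion to be the easy half of the main theorem, the substantial work being the reverse containment that the centre is no larger than $\textrm{Im}(\iota)$.
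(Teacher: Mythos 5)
Your proof is correct, and its engine is the same as the paper's: transfer the question to actions on homotopy groups via the faithfulness of $\beta_{MU}$ (Proposition~\ref{faithfulaction}), where commutativity is automatic because multiplication by a scalar commutes with every endomorphism of an abelian group. The one genuine structural difference lies in how the infinite sum is handled. The paper first reduces to showing that each individual $\sigma_n^{MU}$ is central --- a reduction that implicitly uses that centrality passes to limits of partial sums, i.e.\ continuity of composition in the profinite topology --- and then needs only that $\beta_{MU}(\Psi_{MU}^k)$, hence any finite linear combination of these, commutes with everything in ${\rm{Ab}}_*\left(\pi_*(\underline{MU}_0),\pi_*(\underline{MU}_0)\right)$; Lemma~\ref{stirlingidentity} plays no role there. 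You instead attack the full infinite sum directly, using the vanishing of $m!\genfrac{\{}{\}}{0pt}{}{n}{m}$ for $m>n$ to show that its action on each $\pi_{2n}(\underline{MU}_0)$ truncates to an honest integer scalar $c_n$. Be aware that your step ``the infinite sum defining $\phi_*$ collapses'' silently uses that passing to the action on homotopy is compatible with the limits defining the infinite sums --- equivalently, that evaluating the corresponding functional at the element $e^{2n}\eta_R(t)$ is continuous for the dual-finite topology; this is true, and is exactly how Proposition~\ref{infsumsMU} establishes convergence, but it is the mirror image of the continuity step the paper leaves implicit in its reduction. Either route is sound: the paper's is shorter, while yours buys an explicit formula for the scalars $c_n=\sum_{m=0}^{n}a_m\,m!\genfrac{\{}{\}}{0pt}{}{n}{m}$, which is in effect the diagonal description of ${\rm Im}(\iota)$ that the rest of the paper goes on to exploit.
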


\begin{proof}
It is enough to show that the operations
$\sigma_{n}^{MU}$ commute with all elements of $\mathcal{A}(MU)$.
It is clear from the action of $\Psi^k_{MU}$
on homotopy that $\beta_{MU}(\Psi^k_{MU})$ commutes with
all elements of ${\rm{Ab}}_*\left(\pi_*( \underline{MU}_0), \pi_*( \underline{MU}_0)\right)$.
So the same holds for  $\beta_{MU}(\sigma_{n}^{MU})$. But by Proposition~\ref{faithfulaction},
$\beta_{MU}$ is injective, so
$\sigma_{n}^{MU}$ commutes with all elements of $\mathcal{A}(MU)$.
\end{proof}

\section{Diagonal operations and congruences}
\label{sec4}

\begin{defn}\label{diagonals}
Let $E=MU$ or $BP$.
Write $\mathcal{D}(E)$ for the subring of $\mathcal{A}(E)$ consisting of operations
whose action on each homotopy group $\pi_{2n}(\underline{E}_0)$ is multiplication
by an element $\lambda_{n}$ of the ground ring $R$. We call
elements of $\mathcal{D}(E)$ \emph{unstable diagonal operations}.
\end{defn}

\begin{lem}\label{centreindiag}
Let $E=MU$ or $BP$.
There is an inclusion $Z(\mathcal{A}(E))\subseteq \mathcal{D}(E)$.
\end{lem}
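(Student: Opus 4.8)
The plan is to exploit the faithful action on homotopy (Proposition~\ref{faithfulaction}) together with the duality between operations and cooperations to produce enough genuinely non-diagonal operations to pin the centre down to the diagonal operations. First I would observe that $\beta_E$ is not only injective but is a ring homomorphism, since composition of operations induces composition of the maps on $\pi_*(\underline{E}_0)$. Writing $\pi_{2n}(\underline{E}_0)\cong E^{-2n}$ and recalling that every $\phi\in\mathcal{A}(E)$ acts degree-preservingly, the image of $\beta_E$ lies in $\prod_{n\geq 0}\Endo_R\!\left(\pi_{2n}(\underline{E}_0)\right)$, where each $\pi_{2n}(\underline{E}_0)$ is a finitely generated free $R$-module on which operations act $R$-linearly. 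If $\theta\in Z(\mathcal{A}(E))$, then $\theta$ commutes with every $\phi$, and because the actions are degree-preserving this says, for each $n$,
    $$
    \theta_{*}^{(n)}\,\phi_{*}^{(n)} = \phi_{*}^{(n)}\,\theta_{*}^{(n)} \quad\text{in}\quad \Endo_{R}\!\left(\pi_{2n}(\underline{E}_0)\right),
    $$
where $(-)^{(n)}$ denotes restriction to $\pi_{2n}(\underline{E}_0)$. It therefore suffices to prove that, for each $n$, the commutant of $\{\phi_{*}^{(n)}:\phi\in\mathcal{A}(E)\}$ inside $\Endo_R\!\left(\pi_{2n}(\underline{E}_0)\right)$ is exactly the scalars $R\cdot\mathrm{id}$: then $\theta_{*}^{(n)}=\lambda_n\cdot\mathrm{id}$ with $\lambda_n\in R$, which is precisely the requirement for $\theta\in\mathcal{D}(E)$.

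Second, to realise sufficiently many operations I would work through the duality $\mathcal{A}(E)\cong\hom_{E^*}(QE_*(\underline{E}_0),E^*)$ and the action formula $\phi_*(t)=\overline{\phi}(e^{2h}\eta_R(t))$ for $t\in E^{-2h}$ from~\cite[12.4]{bjw}. For fixed $n$, the endomorphism $\phi_{*}^{(n)}$ depends only on the values of the functional $\overline{\phi}$ on the finitely many generators $e^{2n}\eta_R(x)$, with $x$ ranging over an $R$-basis of $E^{-2n}$. The aim is then to construct, for each ordered pair $(x,y)$ of such basis elements, a functional sending $e^{2n}\eta_R(x)\mapsto y$ and annihilating the generators $e^{2m}\eta_R(z)$ coming from all other basis elements; the resulting operation restricts to the matrix unit $E_{yx}$ on $\pi_{2n}(\underline{E}_0)$. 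Since the commutant of the full set of matrix units in $\Endo_R\!\left(\pi_{2n}(\underline{E}_0)\right)$ is $R\cdot\mathrm{id}$, this yields the scalar-commutant claim of the first paragraph.

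The main obstacle, which I expect to occupy the bulk of the work, is that such a prescription defines an honest element of $\mathcal{A}(E)$ only if it is $E^*$-linear and continuous for the dual-finite topology; in particular it must respect the relations among the generators $e^{2m}\eta_R(z)$, which is exactly the Hopf-ring information carried by $QE_*(\underline{E}_*)$. I would approach this as in Proposition~\ref{infsumsMU}: since $QE_*(\underline{E}_0)$ and $E^*$ are torsion-free, it is enough to build $\overline{\phi}\otimes 1_\Q$ rationally, where the elements $e^{2m}\eta_R(z)$ generate $QE_*(\underline{E}_0)_\Q$, and then to clear denominators so that the functional takes values in the integral lattice $\hom_{E^*}(QE_*(\underline{E}_0),E^*)$. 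Rescaling a matrix unit by a nonzero element of $R$ leaves its commutant unchanged, so this loss is harmless. The genuinely delicate points are (a) verifying that the relevant $e^{2n}\eta_R(x)$ are $E^*_\Q$-linearly independent in $QE_*(\underline{E}_0)_\Q$, so that their images may be prescribed freely, and (b) controlling the relations so that the rational functional, after scaling, does extend to a well-defined $E^*$-linear map on all of $QE_*(\underline{E}_0)$; both require careful use of the Hopf-ring structure and constitute the crux of the proof.
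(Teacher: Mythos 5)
Your opening reduction is fine: since $\beta_E$ is an injective ring homomorphism and operations preserve degree, a central $\theta$ has $\theta_*^{(n)}$ commuting with every $\phi_*^{(n)}$, so it would indeed suffice to know that the commutant of $\{\phi_*^{(n)}:\phi\in\mathcal{A}(E)\}$ in $\Endo_R\left(\pi_{2n}(\underline{E}_0)\right)$ is $R\cdot\mathrm{id}$. The gap is in how you propose to populate this set: the ``clear denominators'' step is not merely delicate, it is false. Because $QE_*(\underline{E}_0)$ is not finitely generated over $E^*$, membership in the integral lattice $\hom_{E^*}(QE_*(\underline{E}_0),E^*)$ is cut out by \emph{infinitely many} congruences with \emph{unbounded} denominators (this is exactly the content of Section~\ref{sec4}), so a rational functional need not admit any nonzero integer multiple that is integral; equivalently, $\hom_{E^*}(QE_*(\underline{E}_0),E^*)\otimes\Q$ is strictly smaller than $\hom_{E^*_\Q}(QE_*(\underline{E}_0)_\Q,E^*_\Q)$. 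The simplest instance of your prescription already fails: take the functional sending $e^{2}\eta_R(x_1)\mapsto N x_1$ and killing all other generators $e^{2m}\eta_R(z)$. If this were an operation it would lie in $\mathcal{D}(MU)$ with sequence $\lambda=(0,N,0,0,\dots)$, hence $\lambda\in S_{MU}\subseteq S_{KU}$ by Proposition~\ref{SMUinSK} (whose proof does not use the present lemma, so there is no circularity). But the $KU$ congruences give
$$
C_m\cdot\lambda \;=\; \frac{(-1)^{m-1}\genfrac{\langle}{\rangle}{0pt}{}{m}{1}\,N}{m!} \;=\; \pm\frac{N}{m}\,\in\,\Z \quad\text{for all } m,
$$
which fails for $m>|N|$. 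So \emph{no} nonzero multiple of this single-degree-supported functional is an operation; rescaling cannot repair the construction, and there is no reason the off-diagonal matrix units $E_{yx}$ in higher degrees fare any better --- your argument provides no control over their infinitely many congruences. (Your point (a), the $E^*_\Q$-linear independence of the $e^{2h}\eta_R(x)$, is correct, but it is point (b) that sinks the proof.)

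The paper avoids constructing operations by duality altogether. It uses the injection $E^0(E)\hookrightarrow\mathcal{A}(E)$ sending a stable operation to its zeroth component, takes the images of specific Landweber-Novikov operations under this map, and invokes the argument of \cite[Proposition 14]{gw}: commuting with these particular, integrally existing operations already forces a central element to act diagonally on homotopy. This realises exactly your commutant strategy, but with operations that are known to exist --- and which are not concentrated in a single degree, which is precisely why they can exist. If you want to salvage your approach, replace the matrix units by the actions of the Landweber-Novikov operations on $\pi_*(\underline{E}_0)$ and show their commutant in each $\Endo_R\left(\pi_{2n}(\underline{E}_0)\right)$ is the scalars; that is the content of \cite[Proposition 14]{gw}.
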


\begin{proof}
We note that there is an injection
$E^0(E)\hookrightarrow \mathcal{A}(E)$ 
from the stable degree
zero operations to the additive unstable bidegree $(0,0)$ operations,
given by sending a stable operation to its zero component. Indeed,
this map fits into a commutative diagram
    $$
    \xymatrix{
    E^0(E) \ar[r]
    \ar[dr]_{\alpha_E} & \mathcal{A}(E)\ar[d]_{\beta_E}\\
    &\rm{Ab}_*(\pi_*(\underline{E}_0),\pi_*(\underline{E}_0)),
    }
    $$
where $\alpha_E$ sends a stable operation to its action on $\pi_*(E)=\pi_*(\underline{E}_0)$.
But, as noted in~\cite{gw}, $\alpha_E$ is injective, so the map
$E^0(E)\to \mathcal{A}(E)$ is injective.

In~\cite[Proposition 14]{gw} particular Landweber-Novikov operations were exploited
in order to show that, for $E=MU$ and $E=BP$, a central stable operation has to act diagonally
on homotopy. Using the above inclusion, we consider the images of these
Landweber-Novikov operations
and then
exactly the same argument shows
that commuting with these elements forces
a central element of $\mathcal{A}(MU)$  or $\mathcal{A}(BP)$ to act diagonally
on homotopy.
\end{proof}

We continue to study the injective ring homomorphism 
$\iota: \mathcal{A}(KU)\to \mathcal{A}(MU)$ of Proposition~\ref{kinject}.
By Lemmas~\ref{imincentre} and~\ref{centreindiag}, we have
    $$
    \textrm{Im}(\iota)\subseteq Z(\mathcal{A}(MU))\subseteq \mathcal{D}(MU).
    $$

Our aim is to show that $\textrm{Im}(\iota)=\mathcal{D}(MU)$ and thus
$\textrm{Im}(\iota)=Z(\mathcal{A}(MU))$. The strategy is to characterize
$\mathcal{D}(MU)$ by a system of congruences and to compare this
with a system of congruences governing the $K$-theory operations
$\mathcal{A}(KU)$. For future use, we will also set up the corresponding congruences
for $BP$ and the Adams summand $G$.
\smallskip

For $E=MU, BP, KU$ or $G$, the relevant congruences arise as follows.
Let $\theta\in\mathcal{A}(E)$ be a diagonal operation, so that $\theta$ acts on
$\pi_{2n}(\underline{E}_0)$ as multiplication by
an element $\lambda_{n}$ of the ground ring $R$.
Of course, for $KU$ and $G$ all operations are diagonal.
 Consider the corresponding $E^*$-linear functional
$\bar{\theta} : QE_{*}(\underline{E}_0)\rightarrow E^{*}$.
We get a set of congruences which must be satisfied by the $\lambda_n$,
characterizing diagonal operations,
arising from $\bar{\theta}(x)\in E^{*}$ for all
$x\in QE_{*}(\underline{E}_0)$. For all these theories,
$QE_{*}(\underline{E}_0)$ is free as an $E^*$-module and
of course, we can let $x$ run through
a basis.

\begin{defn}
We write $S_{E}$ for the subring of $\prod_{n=0}^{\infty}R$
consisting of sequences $(\lambda_n)_{n\geq 0}$ satisfying this system of congruences.
\end{defn}

To be more explicit about these congruences we recall some further information about
the Hopf rings of these theories. Let $x^E\in E^2(\C P^\infty)$ be a choice of
complex orientation class, so that
$E^*(\C P^\infty)=E^*[[x]]$. (Later it will be convenient to choose
$x^{KU}=\varphi_*(x^{MU})$ and $x^G=\hat{\varphi}_*(x^{BP})$
where $\varphi:MU\to KU$ and $\hat{\varphi}:BP\to G$
are the standard maps of ring spectra.)
For any space $X$, there is a coaction map
    $$
    \rho: E^k(X)\to E^*(X)\widehat{\otimes}QE_*(\underline{E}_k);
    $$
see~\cite[6.26]{bjw}. The standard elements $b_{i}^{E}\in QE_{2i}(\underline{E}_2)$
are characterized by the property that
    $$
    \rho(x^E)=\sum_{i=0}^{\infty} b_{i}^{E} (x^{E})^i
        \in E^{\ast}(\C P^\infty)\widehat{\otimes}QE_*(\underline{E}_2)
        =QE_*(\underline{E}_2)[[x^E]].
    $$
Note that $b_0^{E}=0$ and $b_1^{E}=e^2$,
where $e$ is the suspension element.
\medskip

For $E=KU$, a completely explicit formulation
of the congruences, which can be found in~\cite[Theorem 4]{ClarkeBU}, is
    $$
    \frac{\sum_{k}(-1)^{n-k}\genfrac{\langle}{\rangle}{0pt}{}{n}{k}
    \lambda_{k}}{n!} \in\Z \qquad \textrm{for all} \ n\geq 0,
    $$
where the $\genfrac{\langle}{\rangle}{0pt}{}{n}{k}$ are Stirling numbers of
the first kind. We will abbreviate this system of congruences to
    $$
    C_n\cdot \lambda \in\Z \qquad \textrm{for all} \ n\geq 0,
    $$
where we are adopting vector notation and
    \begin{align*}
    C_n     &=\left(\frac{\sum_{k}(-1)^{n-k}\genfrac{\langle}{\rangle}{0pt}{}{n}{k}}{n!}\right)_{k\geq 0},\\
    \lambda &=(\lambda_k)_{k\geq 0}.
    \end{align*}

The coefficient ring $KU^*$ is given by $KU^*=\Z[u,u^{-1}]$ where $|u|=-2$.
As is usual, we write $v=\eta_R(u)\in QKU_0(\underline{KU}_{-2})$.
One finds the congruences above
by letting $x$ run through the $KU^*$-basis for $QKU_{*}(\underline{KU}_0)$
given by the elements $1$ and $b_n^{KU}v$ for $n>0$; see~\cite[Theorem 16.15]{bjw}.
In more detail, the periodicity of $K$-theory gives an isomorphism
of $KU^*$-modules
    $$
    QKU_*(\underline{KU}_0)\cong KU^*\otimes_\Z QKU_0(\underline{KU}_0)
    $$
and $QKU_0(\underline{KU}_0)$ may be identified with the ring of integer-valued
polynomials
    $$
    A=\{f(w)\in\Q[w]\,|\,f(\Z)\subseteq \Z \};
    $$
see~\cite{ClarkeBU, schwartz}. The explicit congruences above arise from the 
binomial polynomial basis for the ring of integer-valued polynomials and
the expansion of the
binomial polynomials in terms of Stirling numbers. Another way of
expressing the same thing is that $\pi_\lambda(b_n^{KU}v)=C_n\cdot\lambda$,
where $\pi_\lambda: QKU_*(\underline{KU}_0)\to \Z$ is the map determined rationally by
    $$
    u^{a}e^{2b}v^{b}\mapsto\lambda_{b}.
    $$

\begin{exs}
The first non-trivial congruences in this family are
    $$
    \begin{tabular}{cl}
     \vspace{0.2cm}
    \large{$\frac{\lambda_{2}-\lambda_{1}}{2} $ }              &$\in\Z$,\\
    \vspace{0.2cm}
    \large{$\frac{\lambda_3-3\lambda_{2}+2\lambda_{1}}{6}$}           &$\in\Z$,\\
    \large{$\frac{\lambda_4-6\lambda_3+11\lambda_{2}-6\lambda_{1}}{24}$} &$\in\Z$.
    \end{tabular}
    $$
\end{exs}
\medskip

\noindent 
Since all operations in $\mathcal{A}(KU)$ are diagonal and diagonal operations are precisely
characterized by the congruences, it is immediate
that we have an isomorphism of rings $\mathcal{A}(KU)\cong S_{KU}$, given by sending
an operation to its action on homotopy.
\medskip

Next we give further details of the congruences characterizing $\mathcal{D}(MU)$.
Consider the restriction of the map
    $$
    \beta_{MU}: \mathcal{A}(MU) \to {\rm{Ab}}_*\left(\pi_*( \underline{MU}_0), \pi_*( \underline{MU}_0)\right)
    $$
to diagonal operations $\mathcal{D}(MU)$. By the definition of $\mathcal{D}(MU)$, this restriction may be viewed as a map
    $$
    \beta_{MU|}:\mathcal{D}(MU)\rightarrow \prod_{n=0}^{\infty}\Z,
    $$
where we
implicitly identify the homomorphism given by multiplication by an
integer $\lambda$ on an abelian group with the integer $\lambda$.
\smallskip

Let $\theta\in \mathcal{D}(MU)$ with $\theta$ acting on
$\pi_{2n}(\underline{MU}_0)$ as multiplication by
$\lambda_{n}\in\Z$; that is,
$\beta_{MU|}(\theta)=(\lambda_n)_{n\geq 0}$.
 We give some simple examples of the congruences
satisfied by the $\lambda_n$
to illustrate how these arise.

The coefficient ring of $MU$ is
given by $MU^*=\Z[x_1,x_2,x_3,\dots]$, where $|x_i|=-2i$.
Using~\cite[Theorem 16.9]{bjw}, the free $MU^{\ast}$-module
$QMU_{\ast}(\underline{MU}_{0})$ has generators
$(b^{MU})^{\alpha}\eta_{R}(x)$,
where
$(b^{MU})^{\alpha}=(b_{1}^{MU})^{\alpha_{1}}(b_{2}^{MU})^{\alpha_{2}}\dots$,
for any finite sequence of non-negative integers
$(\alpha_{1},\alpha_{2},\dots)$, and $x\in MU^{-2|\alpha|}$ where
$|\alpha|=\sum_{i}\alpha_{i}$.

As noted in the proof of Proposition~\ref{faithfulaction}, the main relation in the Hopf ring shows that
rationally $QMU_{\ast}(\underline{MU}_{0})$ has $MU^*$-module generators
$e^{2h}\eta_{R}(x)$.

\begin{exs}
Consider $b_{2}^{MU}\eta_{R}(x_{1})\in QMU_{\ast}(\underline{MU}_0)$. We rewrite
$b_{2}^{MU}$ rationally as
$\frac{1}{2}(e^{4}\eta_{R}(x_{1})-x_{1}e^{2})$.
Then
    $$
    \overline{\theta}(b_{2}^{MU}\eta_{R}(x_{1}))
    = \frac{(\lambda_{2}-\lambda_{1})}{2}x_{1}^{2}\in MU^{\ast}.
    $$
This gives the congruence $\frac{\lambda_{2}-\lambda_{1}}{2}\in\Z$.
\smallskip

Now consider $b_{3}^{MU}\eta_{R}(x_{1})\in QMU_{\ast}(\underline{MU}_0)$.
The same procedure as above shows that
    $$
    \overline{\theta}(b_{3}^{MU}\eta_{R}(x_{1}))
    = \frac{(\lambda_{3}-3\lambda_{2}+2\lambda_1)}{6}x_{1}^{3}+\frac{(\lambda_3-\lambda_1)}{3}a_{2,1}x_1 \in MU^{\ast},
    $$
where $a_{2,1}\in MU^{-4}$.

So this gives us the two congruences
    $$
    \frac{\lambda_3-3\lambda_{2}+2\lambda_{1}}{6} \in\Z \qquad \text{and} \qquad
    \frac{(\lambda_3-\lambda_1)}{3}\in\Z.
    $$
\end{exs}

Notice that the three $MU$ congruences we have produced here are equivalent to the first
two $K$-theory ones: the first two are the same and the third is redundant.

It follows from the definitions
that we have an equality $\beta_{MU}(\mathcal{D}(MU))=S_{MU}$
and since $\beta_{MU}$ is an injective ring homomorphism, this
gives an isomorphism of
rings $\mathcal{D}(MU)\cong S_{MU}$.

\section{The centre}
\label{sec5}

Our goal is now to compare the solution sets $S_{KU}$ and $S_{MU}$
for the congruences coming from $K$-theory and from complex cobordism. We
will show that they are equal and this will allow us to prove the
main result about the centre of $\mathcal{A}(MU)$.

One inclusion follows directly from the existence of the map
$\iota:\mathcal{A}(KU)\hookrightarrow \mathcal{A}(MU)$.

\begin{prop}\label{SKinSMU}
We have the inclusion $S_{K}\subseteq S_{MU}$.
\end{prop}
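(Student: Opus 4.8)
The plan is to transport a solution of the $K$-theory congruences across the map $\iota$ and simply read off that it solves the $MU$ congruences, using nothing more than the existence and explicit form of $\iota$. All the ingredients are already in place: sending an operation to its sequence of homotopy multipliers gives ring isomorphisms $\mathcal{A}(KU)\cong S_{KU}$ and $\mathcal{D}(MU)\cong S_{MU}$, and by the remarks following Lemma~\ref{centreindiag} we have $\textrm{Im}(\iota)\subseteq \mathcal{D}(MU)$. So the whole proposition reduces to checking that $\iota$ is compatible with the passage to homotopy multipliers, i.e. that $\beta_{MU|}\circ\iota$ agrees with the identification $\beta_{KU}\colon\mathcal{A}(KU)\xrightarrow{\sim}S_{KU}$.

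First I would take a sequence $\lambda=(\lambda_n)_{n\geq 0}\in S_{KU}$ and use $\mathcal{A}(KU)\cong S_{KU}$ to produce the corresponding operation $\theta=\sum_{n}a_n\sigma_n^{KU}\in\mathcal{A}(KU)$, whose action on $\pi_{2m}(\underline{KU}_0)$ is multiplication by $\lambda_m$. Applying $\iota$ yields $\iota(\theta)=\sum_{n}a_n\sigma_n^{MU}\in\mathcal{A}(MU)$, and this lies in $\mathcal{D}(MU)$. I would then compute its homotopy multipliers: since $(\Psi_{MU}^{k})_*$ and $(\Psi_{KU}^{k})_*$ both act as multiplication by $k^m$ in degree $2m$ (Proposition~\ref{adamsops}), the operation $\iota(\theta)$ acts on $\pi_{2m}(\underline{MU}_0)$ as multiplication by the very same scalar
$$
\sum_{n}a_n\sum_{k=0}^{n}(-1)^{n+k}\binom{n}{k}k^{m}=\lambda_m,
$$
which by Lemma~\ref{stirlingidentity} is a finite, hence unambiguous, sum for each fixed $m$. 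Thus $\beta_{MU|}(\iota(\theta))=\lambda$, and since $\beta_{MU}(\mathcal{D}(MU))=S_{MU}$ this places $\lambda$ in $S_{MU}$, giving $S_{KU}\subseteq S_{MU}$.

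I do not expect any genuine obstacle here; the single point requiring attention is the compatibility of $\iota$ with homotopy actions, and this is immediate because $\iota$ is defined termwise on the $\sigma_n$ and the Adams operations for $KU$ and $MU$ are prescribed by identical formulae on homotopy. The only subtlety worth flagging is to note that the multiplier sum above is genuinely finite for each $m$ before invoking any convergence statement, so that the comparison of scalars makes sense term by term rather than as an equality of infinite sums.
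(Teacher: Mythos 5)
Your proof is correct and follows essentially the same route as the paper's: lift $\lambda\in S_{KU}$ to $\theta\in\mathcal{A}(KU)$ via $\mathcal{A}(KU)\cong S_{KU}$, apply $\iota$, and observe that $(\iota(\theta))_*=\theta_*=\lambda$, whence $\lambda\in S_{MU}$. The only difference is that you spell out the justification of $(\iota(\theta))_*=\theta_*$ (via the identical action of $\Psi^k_{KU}$ and $\Psi^k_{MU}$ on homotopy and the finiteness of the multiplier sums from Lemma~\ref{stirlingidentity}), which the paper's terse proof leaves implicit.
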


\begin{proof}
Let $\lambda=(\lambda_{n})_{n\geq 0}\in S_{KU}\subset \prod_{n=0}^\infty \Z$.
Then $\lambda=\theta_{\ast}$ for some $\theta\in \mathcal{A}(KU)$ and
$\iota(\theta)\in \mathcal{A}(MU)$, with $(\iota(\theta))_{\ast}=\theta_{\ast}=\lambda$.
Hence, $\lambda\in S_{MU}$.
\end{proof}

To prove the reverse inclusion we consider the relationship between the Hopf rings
for $MU$ and $KU$. The standard map of ring spectra $\varphi: MU\rightarrow K$ induces
a map of Hopf rings $MU_*(\underline{MU}_*) \to KU_*(\underline{KU}_*)$.
Hence there is an induced ring map on indecomposables
$QMU_*(\underline{MU}_*) \to QKU_*(\underline{KU}_*)$, which we denote by
$\phi$.
We now choose the orientation class $x^{KU}$ for $K$-theory to be $\phi_*(x^{MU})$.
With this choice it is routine to check that $\phi(b_i^{KU})=b_i^{MU}$.

Fixing $\theta\in \mathcal{D}(MU)$, we consider
the $MU$ congruences satisfied by $\theta_*=(\lambda_n)_{n\geq 0}$.
It turns out, as the proof of the next proposition shows, that we obtain the $K$-theory
congruences among these
by considering the coefficient of $x_1^n$ in $\overline{\theta}(b_n^{MU}\eta_R(x_1))$.

\begin{prop}\label{SMUinSK}
$S_{MU}\subseteq S_{K}$.
\end{prop}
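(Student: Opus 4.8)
The plan is to show that the $K$-theory congruences $C_n \cdot \lambda \in \Z$ are automatically satisfied by any sequence $\lambda = (\lambda_n)_{n \geq 0} \in S_{MU}$, by extracting precisely these congruences from among the (generally richer) system of $MU$ congruences. The key is the hint already given in the excerpt: for a diagonal operation $\theta \in \mathcal{D}(MU)$ with $\theta_* = \lambda$, the relevant $K$-theory congruence $C_n \cdot \lambda$ should emerge as the coefficient of $x_1^n$ in the expansion of $\overline{\theta}(b_n^{MU}\eta_R(x_1)) \in MU^*$.

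**First I would** take $\lambda = (\lambda_n)_{n\geq 0} \in S_{MU}$, so that by the isomorphism $\mathcal{D}(MU) \cong S_{MU}$ there is a diagonal operation $\theta \in \mathcal{D}(MU)$ with $\beta_{MU|}(\theta) = \lambda$ and a corresponding functional $\overline{\theta} : QMU_*(\underline{MU}_0) \to MU^*$. Since $b_n^{MU}\eta_R(x_1) \in QMU_*(\underline{MU}_0)$, membership in $S_{MU}$ guarantees $\overline{\theta}(b_n^{MU}\eta_R(x_1)) \in MU^*$, and in particular its $x_1^n$-coefficient lies in $\Z$. **Next I would** rewrite $b_n^{MU}\eta_R(x_1)$ rationally in terms of the module generators $e^{2h}\eta_R(x)$, exactly as in the worked examples for $n=2,3$ in Section~\ref{sec4}. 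The mechanism producing the Stirling-number coefficients $\genfrac{\langle}{\rangle}{0pt}{}{n}{k}$ is best seen via the compatibility $\phi(b_n^{KU}) = b_n^{MU}$ established just before the statement: applying $\overline{\theta}$ and tracking only the $x_1^n$-term should reproduce the map $\pi_\lambda(b_n^{KU}v) = C_n \cdot \lambda$ described for $KU$, since the pure power $x_1^n$ corresponds to the top suspension term $e^{2n}$ and the lower-order contributions from the filtration-jumping generators $a_{i,j}$ carry the other monomials in $MU^*$.

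**The main obstacle** I expect is the bookkeeping that isolates the $x_1^n$-coefficient cleanly. Rationally $b_n^{MU}\eta_R(x_1)$ expands as a combination $\sum_h c_{n,h}\, e^{2h}\eta_R(\text{(monomial of degree $-2h$)})$, and $\overline{\theta}$ sends $e^{2h}\eta_R(x) \mapsto \lambda_h x$; the delicate point is verifying that the coefficient assembling into $x_1^n$ is exactly $\frac{1}{n!}\sum_k (-1)^{n-k}\genfrac{\langle}{\rangle}{0pt}{}{n}{k}\lambda_k = C_n \cdot \lambda$, and not contaminated by the decomposable coefficient monomials such as the $a_{2,1}x_1$ term seen for $n=3$. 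I would handle this by using the defining relation $\rho(x^{MU}) = \sum_i b_i^{MU}(x^{MU})^i$ together with the action formula $\theta_*(t) = \overline{\theta}(e^{2h}\eta_R(t))$ from~\cite[12.4]{bjw}, which lets me compute the whole sum $\sum_n \overline{\theta}(b_n^{MU}\eta_R(x_1))(x^{MU})^n$ as a generating function; comparing this with the analogous $KU$ computation (where the same generating function produces the Clarke congruences) shows the $x_1^n$-coefficients agree. Concluding, each $C_n \cdot \lambda \in \Z$, so $\lambda \in S_{KU}$ and $S_{MU} \subseteq S_{KU}$, as required.
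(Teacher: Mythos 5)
Your overall strategy coincides with the paper's: for $\theta\in\mathcal{D}(MU)$ with $\theta_*=\lambda$, extract the $K$-theory congruence $C_n\cdot\lambda\in\Z$ from the $MU$ congruence attached to $b_n^{MU}\eta_R(x_1)$, using the map of Hopf rings induced by $\varphi\colon MU\to KU$. However, the step you yourself flag as the main obstacle is exactly where the argument has a genuine gap, and your proposed fix does not close it. First, the claim that the ``lower-order'' contributions carry only other monomials, so cannot contaminate the $x_1^n$-coefficient, is false for a general choice of polynomial generators: the $a_{i,j}$ are formal group law coefficients, and with the standard choice $x_i=[\C P^i]$ one has $a_{2,1}=[\C P^1]^2-[\C P^2]=x_1^2-x_2$, so the term $\frac{\lambda_3-\lambda_1}{3}a_{2,1}x_1$ does contribute to the $x_1^3$-coefficient, which then equals $\frac{\lambda_3-3\lambda_2+2\lambda_1}{6}+\frac{\lambda_3-\lambda_1}{3}$ rather than $C_3\cdot\lambda$. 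Second, the generating-function comparison cannot be a formality: $\theta$ is an arbitrary diagonal $MU$-operation, not one induced from $K$-theory, so no naturality square relates $\sum_n\overline{\theta}\bigl(b_n^{MU}\eta_R(x_1)\bigr)(x^{MU})^n$ to the analogous $KU$ computation; the assertion that ``the $x_1^n$-coefficients agree'' is precisely the identity that has to be proved, not a consequence of the coaction formula.

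What is missing is the device the paper uses to make this precise. One should project $MU^*$ to $\Z$ not by reading off a monomial coefficient but by a ring map $\pi$ compatible with $\varphi_*$ (equivalently, choose the generators so that $\varphi_*(x_1)=u$ and $\varphi_*(x_i)=0$ for $i>1$, and set $\pi(x_1)=1$, $\pi(x_i)=0$; only with such an adapted choice does $\pi$ agree with coefficient extraction and kill the $a_{i,j}$ with $(i,j)\neq(1,1)$). One then checks, rationally on the generators $e^{2h}\eta_R(x)$ of $QMU_*(\underline{MU}_0)_\Q$, that $V_\lambda:=\pi\overline{\theta}$ factors as $\pi_\lambda\widetilde{\phi}$, where $\widetilde{\phi}$ is the corestriction of $\phi\colon QMU_*(\underline{MU}_0)\to QKU_*(\underline{KU}_0)$ and $\pi_\lambda$ is determined by $u^ae^{2b}v^b\mapsto\lambda_b$. (Note also that the correct direction is $\phi(b_n^{MU})=b_n^{KU}$; you quote the excerpt's $\phi(b_n^{KU})=b_n^{MU}$, but $\phi$ goes from the $MU$ Hopf ring to the $KU$ one, which matters when setting up this factorization.) With that in place, $V_\lambda\bigl(b_n^{MU}\eta_R(x_1)\bigr)=\pi_\lambda\bigl(b_n^{KU}v\bigr)=C_n\cdot\lambda$, and this lies in $\Z$ because $\overline{\theta}$ takes values in $MU^*$, giving $\lambda\in S_{KU}$. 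Without this factorization (or an equivalent mechanism), your outline leaves the central identity unestablished, and as stated the coefficient claim is not even true independently of the choice of generators.
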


\begin{proof}
Let $\lambda\in S_{MU}$. Then there is a $\theta\in \mathcal{D}(MU)$ such
that $\theta_{\ast}=\lambda=(\lambda_{n})_{n\geq 0}
\in\prod_{n=0}^{\infty}\Z$. We define $V_{\lambda}: QMU_{\ast}(\underline{MU}_0)\to \Z$ by the composite
$\pi\overline{\theta}$ where $\pi : MU^{\ast}\rightarrow \Z$ is
defined to be the ring map determined by
    \begin{align*}
    x_1&\mapsto 1,\\
    x_i&\mapsto 0, \quad\text{for $i>1$}.
    \end{align*}
 Thus we have a commutative diagram
    $$
    \xymatrix{ QMU_{\ast}(\underline{MU}_0) \ar[rr]^-{\overline{\theta}}
    \ar[rrd]_-{V_{\lambda}} && MU^{\ast} \ar[d]^-{\pi} \\
    && \Z \\
    }
    $$
and the diagonal map takes $x\in QMU_{\ast}(\underline{MU}_0)$ to some rational
linear combination of the $\lambda_{i}$, which the $MU$ congruences tell
us is in $\Z$.

It is easy to check that we can factorize $V_{\lambda}$ as
$\pi_{\lambda}\widetilde{\phi}$ where
    $$
    \widetilde{\phi}:QMU_{\ast}(\underline{MU}_0)\rightarrow\textrm{Im}(\phi)
    $$
is the map given by restricting the range of
    $$
    \phi:QMU_{\ast}(\underline{MU}_0)\rightarrow QKU_{\ast}(\underline{KU}_0),
    $$
and
    $$
    \pi_{\lambda}:\textrm{Im}(\phi)\rightarrow\Z
    $$
is the $\Q$-linear map determined by
    $$
    u^{a}e^{2b}v^{b}\mapsto\lambda_{b}.
    $$

Now
    $$
    \widetilde{\phi}\left(b_{n}^{MU}\eta_{R}(x_{1})\right)=
    b_{n}^{KU}\eta_R(u)=b_{n}^{KU}v.
    $$

So
    \begin{align*}
    V_{\lambda}(b_{n}^{MU}\eta_{R}(x_{1}))
    &= \pi_{\lambda}\widetilde{\phi}(b_{n}^{MU}\eta_{R}(x_{1})) \\
    & = \pi_{\lambda}\left(b_{n}^{KU}v\right) \\
    & = C_{n}\cdot\lambda.
    \end{align*}

But $V_{\lambda}(b_{n}^{MU}\eta_{R}(x_{1}))=
\pi\bar{\theta}(b_{n}^{MU}\eta_{R}(x_{1}))\in\Z$. So
$C_{n}\cdot\lambda\in\Z$ for all $n\geq 0$ and thus $\lambda\in S_{K}$.

Hence $S_{MU}\subseteq S_{K}$.
\end{proof}

\begin{thm}\label{mainthm}
The image of the injective ring homomorphism $\iota:\mathcal{A}(KU)\hookrightarrow \mathcal{A}(MU)$
is the centre $Z(\mathcal{A}(MU))$.
\end{thm}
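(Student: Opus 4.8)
The plan is to leverage the chain of inclusions
$\textrm{Im}(\iota)\subseteq Z(\mathcal{A}(MU))\subseteq\mathcal{D}(MU)$, already assembled from Lemmas~\ref{imincentre} and~\ref{centreindiag}, and to close it into a chain of equalities by establishing the single remaining inclusion $\mathcal{D}(MU)\subseteq\textrm{Im}(\iota)$. Closing the loop in this way forces all three subrings to coincide, giving in particular the claimed identification $\textrm{Im}(\iota)=Z(\mathcal{A}(MU))$.

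First I would combine Propositions~\ref{SKinSMU} and~\ref{SMUinSK} to record the equality of solution sets $S_{KU}=S_{MU}$. This is the crux of the whole argument and carries all the genuine content: the two systems of congruences, one arising from $K$-theory and one from cobordism, cut out exactly the same subring of $\prod_{n\geq 0}\Z$. Everything after this point is formal.

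Next I would exploit the two ring isomorphisms established earlier, namely $\mathcal{A}(KU)\cong S_{KU}$ (via an operation's action on homotopy) and $\mathcal{D}(MU)\cong S_{MU}$ (via the restricted map $\beta_{MU|}$). Given $\theta\in\mathcal{D}(MU)$, its image $\beta_{MU|}(\theta)=(\lambda_n)_{n\geq 0}$ lies in $S_{MU}=S_{KU}$, so there is a unique $\psi\in\mathcal{A}(KU)$ with $\psi_*=(\lambda_n)_{n\geq 0}$. Applying $\iota$ and using the fact, recorded in the proof of Proposition~\ref{SKinSMU}, that $\iota$ preserves the action on homotopy, one obtains $(\iota(\psi))_*=\psi_*=(\lambda_n)_{n\geq 0}$. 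Thus $\theta$ and $\iota(\psi)$ are two elements of $\mathcal{D}(MU)$ with identical action on homotopy.

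The finishing step invokes the injectivity of $\beta_{MU}$ from Proposition~\ref{faithfulaction}: since $\theta_*=(\iota(\psi))_*$, we conclude $\theta=\iota(\psi)\in\textrm{Im}(\iota)$, yielding $\mathcal{D}(MU)\subseteq\textrm{Im}(\iota)$ and hence the desired equalities. I do not expect a real obstacle at this stage, since all the difficulty has been front-loaded into the congruence comparison $S_{KU}=S_{MU}$, and specifically into Proposition~\ref{SMUinSK}, where one must extract precisely the $K$-theory congruences from the richer family of $MU$ congruences by reading off the coefficient of $x_1^n$ in $\overline{\theta}(b_n^{MU}\eta_{R}(x_1))$. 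The present proof is then pure bookkeeping: transport the equality of congruence rings back through the two faithful-action isomorphisms and appeal once more to the injectivity of $\beta_{MU}$.
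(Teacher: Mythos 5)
Your proposal is correct and is essentially the paper's own argument: the paper assembles the same ingredients (Lemmas~\ref{imincentre} and~\ref{centreindiag}, the equality $S_{KU}=S_{MU}$ from Propositions~\ref{SKinSMU} and~\ref{SMUinSK}, and the two isomorphisms onto the congruence rings) into a commutative diagram, concluding that the inclusions $\textrm{Im}(\iota)\subseteq Z(\mathcal{A}(MU))\subseteq\mathcal{D}(MU)$ must all be equalities. Your elementwise unwinding of that diagram, using the injectivity of $\beta_{MU}$ from Proposition~\ref{faithfulaction} to identify $\theta$ with $\iota(\psi)$, is just the same proof written out pointwise.
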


\begin{proof}
We now have the following commutative diagram, where
both vertical arrows are given by sending operations to their actions
on homotopy.
    $$
    \xymatrix{
    \mathcal{A}(KU) \ar[r]^{\cong}_{\iota}
    \ar[d]_{\cong} & \textrm{Im}(\iota)
    \ar@{^{(}->}[rr]^-{\textrm{by Lemma~\ref{imincentre}}} && Z(\mathcal{A}(MU))
    \ar@{^{(}->}[rr]^-{\textrm{by Lemma~\ref{centreindiag}}} && \mathcal{D}(MU)
    \ar[d]_-{\cong}  \\
    S_{KU} \ar[rrrrr]^-{=}_-{\genfrac{}{}{0pt}{}
    {\textrm{by Propositions~\ref{SKinSMU}}}{\textrm{and \ref{SMUinSK}}}} &&&&& S_{MU}
    }
    $$
It follows that the two inclusions on the
top line of the diagram must be equalities and hence
$\textrm{Im}(\iota)=Z(\mathcal{A}(MU))=\mathcal{D}(MU)$.
\end{proof}

\section{The split case}
\label{sec6}

Let $p$ be an odd prime. In this section we give
the analogue of Theorem~\ref{mainthm}
in the split $p$-local setting, that is with the Adams summand $G$ and 
Brown-Peterson theory $BP$ in place of $KU$ and $MU$.

Most of the steps in the proof follow those given earlier in the non-split
setting. To get started we need to know that
we can express all operations in $\mathcal{A}(G)$
in terms of Adams operations.

\begin{prop}
\label{Gbasis}
The topological ring $\mathcal{A}(G)$ may be identified with the collection
of infinite sums $\{\sum_{n=0}^\infty a_n \hat{\sigma}_n^{G}\, |\, a_n\in \Zpl \}$,
where each $\hat{\sigma}_n^{G}$ is a finite $\Zpl$-linear combination of the Adams
operations $\Psi^k_G$.
\end{prop}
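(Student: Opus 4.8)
The plan is to mirror the proof of Theorem~\ref{Kbasis} for $KU$, working throughout with the faithful action on homotopy. By Proposition~\ref{faithfulaction} the map $\beta_{G}$ is injective, and since every operation in $\mathcal{A}(G)$ is diagonal, I would first identify $\mathcal{A}(G)$ with the congruence ring $S_G$, by the same reasoning that gives $\mathcal{A}(KU)\cong S_{KU}$. The only nonzero homotopy groups are $\pi_{2m(p-1)}(\underline{G}_0)\cong\Zpl$ for $m\geq 0$, so a diagonal operation is recorded by the sequence $(\lambda_m)_{m\geq 0}$ of its actions, and under this identification Proposition~\ref{adamsops} gives $\Psi_{G}^{k}\mapsto\bigl((k^{p-1})^{m}\bigr)_{m\geq 0}$. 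The statement to be proved then becomes: the sequences arising from finite $\Zpl$-combinations of the $\Psi_{G}^{k}$ admit a topological $\Zpl$-basis $\{\hat{\sigma}_n^{G}\}$ for $S_G$.

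Next I would make the congruences defining $S_G$ explicit, using the duality $\mathcal{A}(G)\cong\hom_{G^{*}}(QG_{*}(\underline{G}_0),G^{*})$ and a Hopf-ring basis of $QG_{*}(\underline{G}_0)$ of the form $b_n^{G}\eta_R(u)$, in direct analogy with the elements $b_n^{KU}v$ used for $KU$. Dualising an explicit $\Zpl$-basis of the resulting ring of $p$-local numerical polynomials in the variable corresponding to $w=k^{p-1}$ produces the operations $\hat{\sigma}_n^{G}$: each is by construction a finite $\Zpl$-combination of the evaluations $\Psi_{G}^{k}$, and each is triangular, in the sense that its action on $\pi_{2m(p-1)}(\underline{G}_0)$ vanishes for $m<n$. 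This is the step where the combinatorial input enters, replacing the clean Stirling identity of Lemma~\ref{stirlingidentity} used in the $KU$ computation.

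With the $\hat{\sigma}_n^{G}$ in hand, I would establish convergence exactly as in Proposition~\ref{infsumsMU}: dualising and tensoring with $\Q$, the functional $\overline{\hat{\sigma}_n^{G}}\otimes 1_{\Q}$ vanishes on an increasing family of $G^{*}_{\Q}$-submodules generated by the low-degree classes $e^{2h}\eta_R(x)$, so $\hat{\sigma}_n^{G}\to 0$ in the profinite topology and every infinite sum $\sum_{n}a_n\hat{\sigma}_n^{G}$ with $a_n\in\Zpl$ defines an operation in $\mathcal{A}(G)$. Triangularity then gives unique recovery of the coefficients $a_n$ from $(\lambda_m)$ by a recursion on $m$; the point is that the $p$-adic valuations of the leading coefficients of the $\hat{\sigma}_n^{G}$ should be exactly those forced by the congruences cutting out $S_G$, so that membership $(\lambda_m)\in S_G$ guarantees $a_n\in\Zpl$ at each stage. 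This would prove both that the representation is unique and that it exhausts $S_G\cong\mathcal{A}(G)$.

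The main obstacle is the integral content of the middle step. Over $\Q$ the triangular operations are produced by a routine Vandermonde argument, but controlling their leading coefficients over $\Zpl$ is delicate: all the relevant values satisfy $k^{p-1}\equiv 1\pmod p$, so the points $w=k^{p-1}$ cluster in the group of principal units $1+p\Zpl$, and the governing valuations are those of the Fermat-quotient-type quantities $k^{p-1}-1$ and their higher analogues rather than the factorials appearing for $KU$. Verifying that the minimal such valuations match the congruences defining $S_G$ --- equivalently, pinning down the correct $\Zpl$-basis of the $G$-numerical polynomials --- is the crux of the argument.
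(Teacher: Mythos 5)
There is a genuine gap, and it sits exactly where you put it: the step you call ``the crux'' and ``the main obstacle'' is the entire content of the proposition, and your proposal defers rather than proves it. Identifying $\mathcal{A}(G)$ with the congruence ring $S_G$ and noting $\Psi_G^k\mapsto((k^{p-1})^m)_{m\geq 0}$ is the easy part; what must then be shown is that there exist \emph{finite} $\Zpl$-linear combinations $\hat{\sigma}_n^G$ of the $\Psi_G^k$ that are triangular and whose leading coefficients have precisely the minimal $p$-adic valuations permitted by the congruences cutting out $S_G$ --- the analogue of the role played by $n!$ and Lemma~\ref{stirlingidentity} in the $KU$ case. Over $\Q$ this is a Vandermonde triviality, as you say, but integrally it amounts to identifying the ring of ``$G$-numerical polynomials'' and constructing a $\Zpl$-basis of it adapted to the evaluation points $k^{p-1}\in 1+p\Zpl$, a Bhargava-type generalized-factorial computation. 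Without it, your recursion recovering the $a_n$ from $(\lambda_m)$ cannot be shown to stay in $\Zpl$, so neither uniqueness of the representation nor surjectivity onto $S_G$ is established. A secondary inaccuracy: for $G$ the Hopf-ring generators of $QG_*(\underline{G}_0)$ are not of the form $b_n^G\eta_R(u)$ ``in direct analogy with $b_n^{KU}v$''; only the $b_{p^i}^G$ are essential, and the spanning elements are monomials $(b^G)^\alpha\hat{v}^i$ subject to the weight condition $\sum\alpha_j=i(p-1)$, so even the set-up of the congruences is not a verbatim copy of the $KU$ case.

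The paper avoids all of this with a short structural argument you did not consider: $\mathcal{A}(G)$ is obtained from $\mathcal{A}(KU)$ by applying the Adams idempotent $e_0$, which satisfies $e_0\Psi_{KU}^k=\Psi_G^k$. Hence the image $\{e_0\sigma_n^{KU}\}$ of the topological basis of Theorem~\ref{Kbasis} is a topological spanning set of $\mathcal{A}(G)$ whose members are automatically finite $\Zpl$-linear combinations of $G$-Adams operations, and from a topological spanning set one can extract a topological basis $\{\hat{\sigma}_n^G\}$. Note that the proposition asserts only the \emph{existence} of such a basis, with no formulas; your route, if completed, would yield the stronger explicit statement (this is carried out in~\cite{strong}), but as written it leaves open precisely the integrality question that the idempotent argument renders unnecessary.
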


\begin{proof}
We can obtain $\mathcal{A}(G)$ from $\mathcal{A}(KU)$ by applying the
Adams idempotent $e_0$. This idempotent operation acts on homotopy as the
identity on $\pi_{n}(\underline{KU}_0)$ if $n$ is a multiple of $2(p-1)$ and 
as zero otherwise. Thus we see that $e_0\Psi^k_{KU}=\Psi^k_G$.
We obtain the topological spanning set $\{e_0\sigma_n^{KU}\,|\,n\geq 0\}$ of $\mathcal{A}(G)$
from the topological basis of $\mathcal{A}(KU)$ given in Theorem~\ref{Kbasis}.
Each element is a finite linear combination of $G$ Adams operations.
Within this spanning set we can find a topological basis
$\{\hat{\sigma}_n\,|\, n\geq 0\}$
of $\mathcal{A}(G)$.
\end{proof}   

Explicit formulas  for a choice of such topological basis elements 
for $\mathcal{A}(G)$ are given
in~\cite[Chapter 4]{strong}, but we do not need these here.

\begin{thm}\label{mainthmBP}
There is an injective ring homomorphism
$\hat{\iota}:\mathcal{A}(G)\rightarrow \mathcal{A}(BP)$ such that
${\rm{Im}}(\hat{\iota})=Z(\mathcal{A}(BP))$.
\end{thm}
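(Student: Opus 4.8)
The plan is to reproduce the development of Sections~\ref{sec3}--\ref{sec5} essentially verbatim, with $G$ in place of $KU$, with $BP$ in place of $MU$, with ground ring $\Zpl$ throughout, and with the standard map now $\hat{\varphi}:BP\to G$. Proposition~\ref{Gbasis} already supplies the analogue of Theorem~\ref{Kbasis}: a topological basis $\{\hat{\sigma}_n\}$ of $\mathcal{A}(G)$ in which each $\hat{\sigma}_n$ is a finite $\Zpl$-linear combination $\sum_k c_{n,k}\Psi^k_G$ of Adams operations. I would define $\hat{\iota}$ at the level of Adams operations by $\Psi^k_G\mapsto\Psi^k_{BP}$, so that $\hat{\sigma}_n^{BP}=\sum_k c_{n,k}\Psi^k_{BP}\in\mathcal{A}(BP)$ and $\hat{\iota}\bigl(\sum_n a_n\hat{\sigma}_n\bigr)=\sum_n a_n\hat{\sigma}_n^{BP}$.

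First I would check that these infinite sums converge in $\mathcal{A}(BP)$, the analogue of Proposition~\ref{infsumsMU}. The argument transfers directly: $BP^*=\Zpl[v_1,v_2,\dots]$ is torsion-free and concentrated in even degrees, $QBP_*(\underline{BP}_0)$ is free, and the $BP$-analogue of Lemma~\ref{adamsMUopfunctional} together with the Stirling identity of Lemma~\ref{stirlingidentity} shows that the relevant functionals vanish on $e^{2h}\eta_{R}(x)$ for $h<n$, forcing the operations to tend to zero in the dual-finite topology. That $\hat{\iota}$ is an injective ring homomorphism is then the analogue of Proposition~\ref{kinject}: it is a ring map because products on both sides are governed by $\Psi^k\Psi^l=\Psi^{kl}$, and it is injective because $\hat{\iota}(\theta)$ acts on $\pi_{2m}(\underline{BP}_0)$ by the same scalar as $\theta$ acts on $\pi_{2m}(\underline{G}_0)$ whenever $(p-1)\mid m$, so a nonzero $\theta$ (detected on homotopy since $\beta_G$ is injective) maps to a nonzero operation (detected since $\beta_{BP}$ is injective). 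The inclusion $\mathrm{Im}(\hat{\iota})\subseteq Z(\mathcal{A}(BP))$ is proved exactly as in Lemma~\ref{imincentre} using the injectivity of $\beta_{BP}$ from Proposition~\ref{faithfulaction}, and the reverse input $Z(\mathcal{A}(BP))\subseteq\mathcal{D}(BP)$ is already contained in Lemma~\ref{centreindiag}.

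It then remains to compare congruence rings, i.e.\ to prove the analogue of $S_{KU}=S_{MU}$, where as in Section~\ref{sec4} one has $\mathcal{A}(G)\cong S_G$ and $\mathcal{D}(BP)\cong S_{BP}$. The inclusion $S_G\subseteq S_{BP}$ follows formally from the existence of $\hat{\iota}$, exactly as in Proposition~\ref{SKinSMU}. For the reverse inclusion $S_{BP}\subseteq S_G$ I would mimic Proposition~\ref{SMUinSK}: with the orientation choice $x^G=\hat{\varphi}_*(x^{BP})$ fixed in Section~\ref{sec4}, the induced map on indecomposables $\hat{\phi}:QBP_*(\underline{BP}_0)\to QG_*(\underline{G}_0)$ satisfies $\hat{\phi}(b_i^{BP})=b_i^G$, and one composes $\overline{\theta}$ with the ring map $\pi:BP^*\to\Zpl$ sending $v_1\mapsto 1$ and $v_i\mapsto 0$ for $i>1$. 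Since $\hat{\phi}(b_n^{BP}\eta_{R}(v_1))=b_n^G\eta_{R}(u)=b_n^G v$, the values $\pi\overline{\theta}(b_n^{BP}\eta_{R}(v_1))\in\Zpl$ recover precisely the congruences defining $S_G$, so $\lambda\in S_G$. Assembling these inclusions with the same commutative square as in the proof of Theorem~\ref{mainthm} forces $\mathrm{Im}(\hat{\iota})=Z(\mathcal{A}(BP))=\mathcal{D}(BP)$.

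The main obstacle is the bookkeeping forced by the Adams idempotent: $G$ is concentrated in degrees divisible by $2(p-1)$, so its homotopy, its congruences, and its basis elements $\hat{\sigma}_n$ are indexed more sparsely than in the $K$-theory case, and one must verify that the homotopy action of $\hat{\iota}(\theta)$ on \emph{all} even degrees of $\pi_*(\underline{BP}_0)$ is consistent with $\theta$ being supported only on the divisible degrees. Concretely, the delicate points are the correct normalization of $\hat{\varphi}:BP\to G$ on coefficients, so that $v_1\mapsto u$ up to a unit and $v_i\mapsto 0$ for $i>1$, making $\pi$ and $x^G=\hat{\varphi}_*(x^{BP})$ mutually compatible, and the verification that $\hat{\phi}(b_n^{BP}\eta_{R}(v_1))=b_n^G v$ with this normalization. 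Once these identifications are in place, the Stirling-number computations and the congruence comparison go through unchanged.
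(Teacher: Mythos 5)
Your overall architecture coincides with the paper's: define $\hat\iota$ by replacing $\Psi^k_G$ with $\Psi^k_{BP}$ in the topological basis of Proposition~\ref{Gbasis}, check convergence by the method of Proposition~\ref{infsumsMU}, deduce injectivity and $\mathrm{Im}(\hat\iota)\subseteq Z(\mathcal{A}(BP))\subseteq\mathcal{D}(BP)$ from Proposition~\ref{faithfulaction} and Lemmas~\ref{imincentre} and~\ref{centreindiag}, and reduce everything to the comparison of congruence rings, with $S_G\subseteq S_{BP}$ formal. All of that is fine. The genuine gap is in your proof of $S_{BP}\subseteq S_G$, where the verbatim transcription of Proposition~\ref{SMUinSK} fails on degree grounds. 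The element $b_n^{BP}\eta_R(v_1)$ does not exist in $QBP_*(\underline{BP}_0)$: since $|v_1|=-2(p-1)$, one has $\eta_R(v_1)\in QBP_0(\underline{BP}_{-2(p-1)})$, hence $b_n^{BP}\eta_R(v_1)\in QBP_{2n}(\underline{BP}_{2-2(p-1)})$, and $2-2(p-1)\neq 0$ for $p$ odd, with no periodicity in $BP$ to repair this. The non-split argument worked only because $MU^*$ contains the degree $-2$ element $x_1$, whose degree exactly cancels the space shift of a single $b_n^{MU}$; $BP^*$ is concentrated in degrees divisible by $2(p-1)$, so no such element exists. For the same reason $b_n^G\hat{v}$ is not in $QG_*(\underline{G}_0)$ (it lies over $\underline{G}_2$ even after applying periodicity), so $\overline{\theta}$ cannot be evaluated on your proposed elements. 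Moreover, your claim that these values would ``recover precisely the congruences defining $S_G$'' is unfounded for a second reason: unlike $KU$, where \cite[Theorem 16.15]{bjw} provides the basis $\{1,\,b_n^{KU}v\}$ and hence the Stirling-number congruences of Theorem~\ref{Kbasis}, for the Adams summand only the elements $b_{p^j}^{G}$ are non-redundant, and $QG_*(\underline{G}_0)$ is spanned over $G^*$ by monomials $(b^G)^\alpha\hat{v}^{\,i}$ subject to $\sum_j\alpha_j=i(p-1)$; there is no ``one $b_n$ at a time'' description of $S_G$.

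The paper's proof addresses exactly this point. It works with the genuine $BP^*$-module generators $(b^{BP})^\alpha\eta_R(v)$ of $QBP_*(\underline{BP}_0)$, where $v\in BP^{-2|\alpha|}$ (so necessarily $(p-1)\mid|\alpha|$), takes a $\Zpl$-basis $\{\hat{f}_n\}$ of $QG_0(\underline{G}_0)$ --- the congruences defining $S_G$ come from evaluating $\overline{\theta}$ on these --- and writes each $\hat{f}_n$ as a $G^*$-linear combination of the monomials $(b^G)^\alpha\hat{v}^{\,i}$. Each such monomial is, up to a unit, the image under $\hat\phi$ of $(b^{BP})^\alpha\eta_R(v_1^{\,i})$, so each $\hat{f}_n$ lies, up to multiplication by a power of $\hat{u}$, in $\mathrm{Im}(\hat\phi)$, and then the factorization argument of Proposition~\ref{SMUinSK} applies. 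So your map $\pi:BP^*\to\Zpl$, $v_1\mapsto 1$, $v_i\mapsto 0$ for $i>1$, is the right ingredient, but it must be paired against monomials in the $b_{p^j}^{BP}$ with degree-matching powers of $v_1$, not against single $b_n$'s. Incidentally, the ``main obstacle'' you flag at the end --- consistency of the action of $\hat\iota(\theta)$ on all even homotopy of $\underline{BP}_0$ --- is vacuous, since $\pi_*(\underline{BP}_0)$ is itself concentrated in degrees divisible by $2(p-1)$; the real obstacle in the split case is the degree bookkeeping just described.
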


\begin{proof}[Outline proof of Theorem~\ref{mainthmBP}]

By Proposition~\ref{Gbasis}, we can identify
$\mathcal{A}(G)$ with
    $$
    \left\{ \sum_{n=0}^\infty a_n\hat{\sigma}_n^G\,|\,a_n\in\Zpl \right\}.
    $$
We define $\hat{\sigma}_n^{BP}\in\mathcal{A}(BP)$ in the obvious way, by replacing
$G$-Adams operations by the corresponding $BP$ ones, given by Proposition~\ref{adamsops}.
The method of the proof of Proposition~\ref{infsumsMU} shows that $\hat{\sigma}_n^{BP}\to 0$ as
$n\to\infty$ in the filtration topology of $\mathcal{A}(BP)$. Since
$\mathcal{A}(BP)$ is complete in this topology, we can define
$\hat{\iota}:\mathcal{A}(G)\to\mathcal{A}(BP)$ by
$\sum_{n=0}^{\infty}a_{n}\hat{\sigma}_{n}^{G}\mapsto
\sum_{n=0}^{\infty}a_{n}\hat{\sigma}_{n}^{BP}$.
This is an injective ring homomorphism, just as in the non-split case and
the same proof as for Lemma~\ref{imincentre} shows that
$\textrm{Im}(\hat{\iota})\subseteq Z(\mathcal{A}(BP))$.
By Lemma~\ref{centreindiag}, there
is an inclusion $Z(\mathcal{A}(BP))\subseteq \mathcal{D}(BP)$.

Recall that $\beta_{BP}$ is the injective ring homomorphism
    \begin{align*}
    \beta_{BP}:\mathcal{A}(BP) & \rightarrow \textrm{Ab}_*(\pi_*(\underline{BP}_0),\pi_*(\underline{BP}_0)) \\
    \theta & \mapsto \theta_{\ast}.
    \end{align*}
Restricting to $\mathcal{D}(BP)$, we have a map
    $$
    \beta_{BP|}:\mathcal{D}(BP)\to \prod_{n=0}^\infty \Zpl.
    $$
Consider the congruences satisfied by $\beta_{BP|}(\theta)=(\mu_n)_{n\geq 0}$,
where the operation $\theta$ acts on $\pi_{2(p-1)n}(\underline{BP}_0)$ as multiplication by $\mu_n$.
We have ${\beta_{BP}}(\mathcal{D}(BP))=S_{BP}$, and thus an isomorphism of rings
$\mathcal{D}(BP)\cong S_{BP}$.

For $E=BP$ or $G$, we write
$b_{(i)}^{E}=b_{p^i}^{E}$ and we recall that
all the other Hopf ring elements $b_i^{E}$ are redundant.
Let $(b^{E})^{\alpha}=(b_{(0)}^{E})^{\alpha_{0}}(b_{(1)}^{E})^{\alpha_{1}}\dots$ for a
finite integer sequence $\alpha=(\alpha_{0},\alpha_{1},\dots)$.

Then,
using~\cite[Theorem 16.11(a)]{bjw}, we find
that $QBP_{\ast}(\underline{BP}_0)$ is free as a $BP^{\ast}$-module and
it is generated by elements of the form $(b^{BP})^\alpha \eta_{R}(v)$ with $v\in BP^{-2|\alpha|}$ where
$|\alpha|=\sum\alpha_{i}$. 

So the $BP$ congruences come from $\overline{\theta}((b^{BP})^\alpha \eta_{R}(v))\in BP^*$, for
$v$ and $\alpha$ as above and we now compare the solution sets for the
$BP$ and $G$ congruences.

The inclusion $S_{G}\subseteq S_{BP}$ follows directly from the
existence of $\hat{\iota}$.

For the reverse inclusion, we consider the ring map 
$QBP_{\ast}(\underline{BP}_0)\rightarrow QG_{\ast}(\underline{G}_0)$
coming from the map of ring spectra $\hat{\phi}:BP\to G$. This
takes $b_{n}^{BP}$ to $b_{n}^{G}$. Write $G^*=\Zpl[\hat{u}, \hat{u}^{-1}]$
where $|\hat{u}|=2(p-1)$.
The elements
$(b^{G})^\alpha\hat{v}^{i}$ span $QG_{\ast}(\underline{G}_0)$ as a $G^{\ast}$-module,
where $\hat{v}=\eta_R(\hat{u})$
and $i\in\Z$ satisfies $\sum \alpha_j=i(p-1)$.
Let $\{\hat{f}_n\,|\, n\geq 0\}$ be a $\Zpl$-basis of $QG_0(\underline{G}_0)$.
Then we can express each $\hat{f}_{n}$ as a
$G^{\ast}$-linear combination of the $(b^G)^\alpha \hat{v}^i$. This means that, up to some
shift by a power of $\hat{u}$, each $\hat{f}_{n}$ is in
the image of the map from $QBP_{\ast}(\underline{BP}_0)$. The analogous proof to
that in Lemma~\ref{SMUinSK} now shows that $S_{BP}\subseteq S_{G}$.

Therefore we have the following commutative diagram, where both vertical maps send
operations to their actions on homotopy.
    $$
    \xymatrix{
    \mathcal{A}(G) \ar[r]^-{\cong}_-{\hat{\iota}} \ar[d]_-{\cong} &
    \textrm{Im}(\hat{\iota}) \ar@{^{(}->}[rr] && Z(\mathcal{A}(BP))
    \ar@{^{(}->}[rr] && \mathcal{D}(BP)
    \ar[d]_-{\cong} \\
    S_{G} \ar[rrrrr]^-{=} &&&&& S_{BP}
    }
    $$

It follows that $\textrm{Im}(\hat{\iota})=Z(\mathcal{A}(BP))$.
\end{proof}

\end{document}